\PassOptionsToPackage{table,xcdraw}{xcolor}
\documentclass[onefignum,onetabnum,final]{siamonline220329}


\usepackage[table]{xcolor}
\usepackage[scaled=.98]{BOONDOX-uprscr}
\usepackage[utf8]{inputenc}
\usepackage{yfonts}
\usepackage{eufrak}
\pdfminorversion=7
\usepackage[english]{babel}
\usepackage{graphicx}
\usepackage{fancyhdr}
\usepackage{amsfonts,amssymb,amsmath,amssymb,mathtools,enumitem}
\usepackage[percent]{overpic}
\usepackage{tikz,mathrsfs,xargs}
\usepackage[colorinlistoftodos,prependcaption,textsize=tiny]{todonotes}
\newcommandx{\at}[2][1=]{\todo[linecolor=red,backgroundcolor=red!25,bordercolor=red,#1]{#2}}

\DeclareTextFontCommand{\textgoth}{\mathfrak}
\DeclareMathAlphabet{\mathpzc}{OT1}{pzc}{m}{it}

\usepackage{hyperref}
\usepackage{booktabs}
\usepackage{varwidth}
\usepackage{cleveref}
\usepackage{cite}
\usepackage{algorithm,float}
\usepackage{algpseudocode}

\definecolor{rev1}{HTML}{cb270f}
\definecolor{rev2}{HTML}{1c8235}
\definecolor{rev3}{HTML}{3815B9}

\DeclareMathOperator*{\argmin}{argmin}

%

\usepackage{ifthen,tabularx,graphicx,multirow}

\usepackage{rotating}
\usepackage{etoolbox}
\usepackage{tcolorbox}
\usepackage{bbm}

\newcolumntype{Y}{>{\centering\arraybackslash}X}
\definecolor{ddg}{rgb}{0.2, 0.2, 0.2}
\definecolor{llg}{rgb}{0.95, 0.95, 0.95}
\tikzstyle{stuff_fill}=[rectangle,draw,fill=pink,minimum size=0.5em]

\usepackage{enumitem}
\setlist[enumerate]{leftmargin=.5in}
\setlist[itemize]{leftmargin=.5in}


\newsiamremark{remark}{Remark}
\newsiamremark{example}{Example}
\newsiamremark{hypothesis}{Hypothesis}
\crefname{hypothesis}{Hypothesis}{Hypotheses}
\newsiamthm{claim}{Claim}

\headers{Multiplicative dynamic mode decomposition}{N. Boull\'e and M. J. Colbrook}

\title{Multiplicative Dynamic Mode Decomposition\thanks{This work was funded by an INI-Simons Research Fellowship and the Office of Naval Research (ONR) under grant N00014-23-1-2729 (NB), and a Cecil King Travel Scholarship from the Cecil King Foundation and the London Mathematical Society (MJC).}}

\author{Nicolas Boull\'e\thanks{Department of Mathematics, Imperial College London, London, SW7 2AZ, UK (\email{n.boulle@imperial.ac.uk}).}
  \and Matthew J. Colbrook\thanks{Department of Applied Mathematics and Theoretical Physics, University of Cambridge, Cambridge, CB3 0WA, UK (\email{m.colbrook@damtp.cam.ac.uk}).}}




\newcommand{\x}{\mathbf{x}}
\newcommand{\y}{\mathbf{y}}

\newcommand{\R}{\mathbb{R}}
\newcommand{\C}{\mathbb{C}}
\newcommand{\K}{\mathbb{K}}

\newcommand{\Cv}{\mathbf{C}}

\newcommand{\Fv}{\mathbf{F}}
\newcommand{\Gv}{\mathbf{G}}

\newcommand{\Kv}{\mathbf{K}}

\newcommand{\Wv}{\mathbf{W}}

\newcommand{\Psiv}{\mathbf{\Psi}}
\newcommand{\muv}{{\bm\mu}}

\newcommand{\F}{\textup{F}}
\newcommand{\gv}{\mathbf{g}}
\newcommand{\hv}{\mathbf{h}}
\newcommand{\fv}{\mathbf{f}}

\newcommand*{\dd}{{\,\mathrm{d}}}
\newcommand*{\pp}{{[{-}\pi,\pi]_{\mathrm{per}}}}

\newcommand*{\spec}{{\mathrm{Sp}}}

\newcommand{\kdmd}{\mathbf{K}_{\text{EDMD}}}
\newcommand{\kmpd}{\mathbf{K}_{\text{mpEDMD}}}
\newcommand{\kmult}{\mathbf{K}_{\text{MultDMD}}}

\renewcommand{\K}{\mathcal{K}}
\usepackage{bm}
\DeclareMathOperator*{\diag}{diag}


\begin{document}

\maketitle

\begin{abstract}
	Koopman operators are infinite-dimensional operators that linearize nonlinear dynamical systems, facilitating the study of their spectral properties and enabling the prediction of the time evolution of observable quantities. Recent methods have aimed to approximate Koopman operators while preserving key structures. However, approximating Koopman operators typically requires a dictionary of observables to capture the system's behavior in a finite-dimensional subspace. The selection of these functions is often heuristic, may result in the loss of spectral information, and can severely complicate structure preservation. This paper introduces Multiplicative Dynamic Mode Decomposition (MultDMD), which enforces the multiplicative structure inherent in the Koopman operator within its finite-dimensional approximation. Leveraging this multiplicative property, we guide the selection of observables and define a constrained optimization problem for the matrix approximation, which can be efficiently solved. MultDMD presents a structured approach to finite-dimensional approximations and can more accurately reflect the spectral properties of the Koopman operator. We elaborate on the theoretical framework of MultDMD, detailing its formulation, optimization strategy, and convergence properties. The efficacy of MultDMD is demonstrated through several examples, including the nonlinear pendulum, the Lorenz system, and fluid dynamics data, where we demonstrate its remarkable robustness to noise.
\end{abstract}

\begin{keywords}
	dynamical systems, Koopman operator, dynamic mode decomposition, structure-preserving algorithms
\end{keywords}

\begin{MSCcodes}
	37A30, 37M10, 47A25, 47B33, 65J10, 65P99, 65T99
\end{MSCcodes}

\section{Introduction}
We consider discrete-time dynamical systems of the form:
\begin{equation} \label{eq_dyn_system}
	\x_{n+1} = \Fv(\x_n), \quad n\geq 0,
\end{equation}
where $\Omega\subset\R^d$ is the state space, $\Fv:\Omega\to\Omega$ is an unknown nonlinear map, and $\x_0\in \Omega$. We aim to study this system through a global linearization in infinite dimensions by analyzing its Koopman operator~\cite{koopman1931hamiltonian, koopman1932dynamical}. For a positive measure $\omega$, the Koopman operator $\K:L^2(\Omega,\omega)\to L^2(\Omega,\omega)$ acts on square-integrable functions $g\in L^2(\Omega,\omega)$ as
\begin{equation} \label{eq_koopman_op}
	[\mathcal{K} g](\x)=(g\circ \Fv)(\x)=g(\Fv(\x)), \quad \x\in\Omega.
\end{equation}
The functions $g$ are called observables and measure the state of the system since
$[\K g](\x_n) = g(\x_{n+1})$ for $n\geq 0$.
We are interested in the data-driven recovery of spectral properties of the Koopman operator, such as its eigenfunctions and eigenvalues. The spectral information of $\K$ contains substantial information about the dynamical system, and this area has attracted a surge of interest over the last decade~\cite{mezic2013analysis,budivsic2012applied,brunton2021modern,colbrook2023multiverse}.

\begin{figure}[t!]
	\centering
	\begin{overpic}[width=\textwidth]{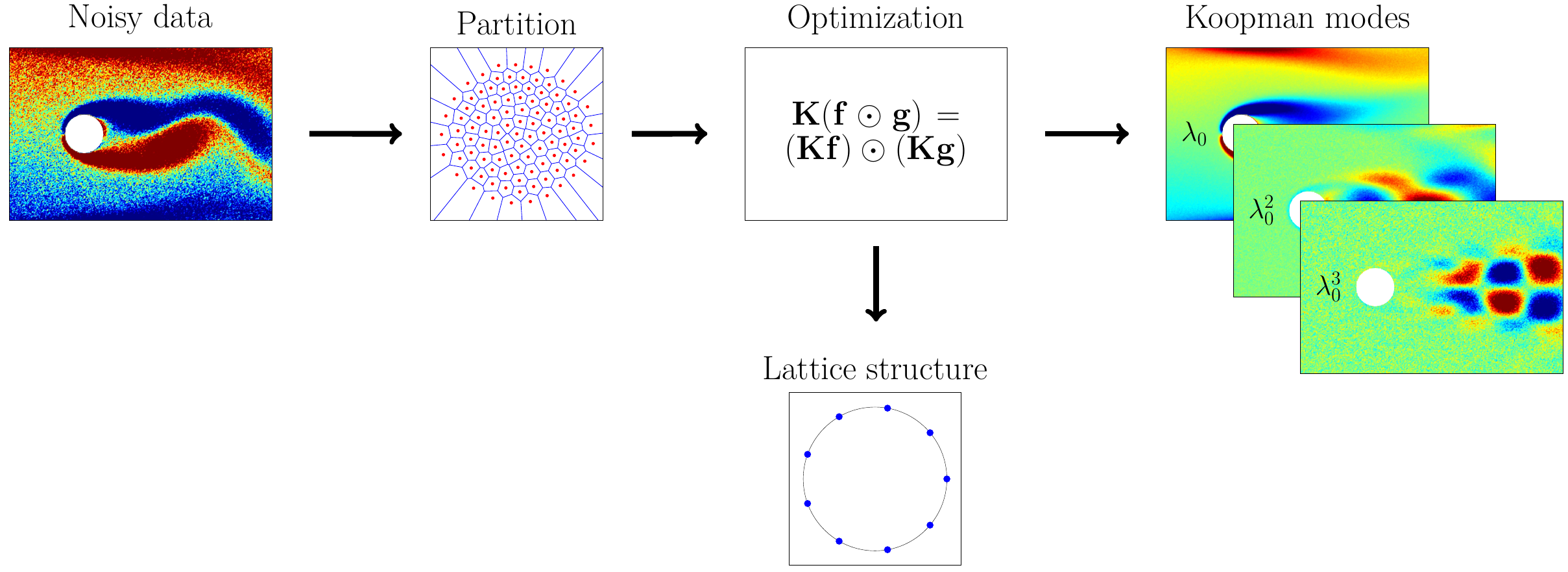}
	\end{overpic}
	\caption{Schematic of the Multiplicative Dynamic Mode Decomposition (MultDMD) method.}
	\label{fig_schema}
\end{figure}

We assume that we have access to $M\geq 1$ snapshot pairs of the system:
\begin{equation}
	\label{eq_snapshot_data}
	\{(\x^{(m)},\y^{(m)}=\Fv(\x^{(m)}))\}_{m=1}^M,
\end{equation}
where the data could come from a single trajectory (e.g., if the dynamical system is ergodic) or multiple trajectories. A popular method for data-driven approximation of the spectral content of $\K$ is \textit{Dynamic Mode Decomposition} (DMD), first introduced in the fluids community~\cite{schmid2009dynamic,schmid2010dynamic} and later connected to Koopman operators~\cite{rowley2009spectral}. DMD forms a Galerkin approximation of the action of $\K$ on a finite-dimensional space. Extended DMD (EDMD) \cite{williams2015data} makes this connection explicit by considering a dictionary (basis) of (possibly) nonlinear observables. There is much interest in establishing convergence results for DMD-type methods to their infinite-dimensional Koopman counterparts. For example, \textit{Residual Dynamic Mode Decomposition} (ResDMD)~\cite{colbrook2021rigorous,colbrook2023residualJFM} explicitly measures, controls, and minimizes the projection errors associated with EDMD, ensuring convergence to the spectral properties of $\mathcal{K}$. We refer to the survey \cite{colbrook2023multiverse} for more information on DMD methods and their convergence properties.

Recent attention has been given to structure-preserving DMD methods. For example, Baddoo et al. provided a framework, \textit{physics-informed DMD} (piDMD), for imposing constraints within DMD and a \textit{linear} choice of dictionary \cite{baddoo2023physics}. Another example is \textit{measure-preserving EDMD} (mpEDMD) \cite{colbrook2023mpedmd}, which ensures that the EDMD approximation preserves the measure $\omega$ and has the benefit of being dictionary agnostic. When designing structure-preserving methods, there is an important distinction between DMD and EDMD:
\begin{itemize}
	\item DMD constructs a matrix that acts on the state space, effectively serving as a global linearization of the dynamical system;
	\item EDMD forms a finite-dimensional approximation of $\K$ in coefficient space.
\end{itemize}
This difference can make it challenging to enforce that EDMD methods preserve structures, as the choice of dictionary often determines the complexity of the optimization problem.\footnote{Fortunately, this issue is mitigated with mpEDMD.}

A useful property of $\K$, and the focus of this paper, is its multiplicative structure: if $f,g\in L^2(\Omega,\omega)$ such that $fg\in L^2(\Omega,\omega)$, then
\begin{equation} \label{eq_koopman_mult}
	[\K(fg)](\x) = [\K(f)](\x)[\K(g)](\x),\quad \x\in\Omega.
\end{equation}
For example, suppose that $\lambda_1$ and $\lambda_2$ are eigenvalues with eigenfunctions $\phi_1$ and $\phi_2$, respectively. Then, assuming all relevant observables lie in $L^2(\Omega,\omega)$, we have
\[\K(\phi_1^n\phi_2^m)=\lambda_1^{n}\lambda_2^{m}\phi_1^n\phi_2^m,\quad n,m\in\mathbb{N}.\]
This property induces a lattice or group structure on the spectrum of the Koopman operator. Throughout this paper, we assume that $\K$ is unitary, which is equivalent to the dynamical system being measure-preserving and invertible (up to $\omega$-null sets)~\cite[Chapt.~7]{eisner2015operator}. Ridge showed that if $\K$ is unitary, then both $\spec(\K)$ and the set of eigenvalues of $\K$ are unions of subgroups of $\mathbb{T}=\{z\in\mathbb{C}:|z|=1\}$~\cite{ridge1973spectrum}. In other words, \cref{eq_koopman_mult} implies a circular symmetry of spectral properties of $\K$.

We introduce a new method called \textit{Multiplicative Dynamic Mode Decomposition} (MultDMD), that aims to preserve the multiplicative structure of the Koopman operator in \cref{eq_koopman_mult} on its finite-dimensional approximation. This leads to a natural choice of basis functions and a specific constrained optimization problem to find the matrix approximation of the action of the Koopman operator on the resulting finite-dimensional subspace. The matrix can then be used to compute eigenvalues and eigenfunctions of the Koopman operator and extract information about the system's dynamics. MultDMD enforces \cref{eq_koopman_mult} irrespective of the number of basis functions or data snapshots. Additionally, the structure of the optimization problem (constrained least-squares) can be exploited to develop an efficient algorithm. A schematic of the MultDMD method is shown in~\cref{fig_schema}.

The paper is organized as follows. We begin by introducing EDMD and related works on structure-preserving DMD-type methods in \cref{sec_preliminary}. Then, in \cref{sec_multiplicative}, we present a suitable dictionary of basis functions, along with an efficient constrained optimization algorithm for preserving the multiplicative structure of the Koopman operator on its finite-dimensional approximant. Simple conditions for convergence are given in \cref{sec_convergence}. We analyze the performance of MultDMD in \cref{sec_examples_MultDMD} through several numerical examples. These demonstrate its ability for data-driven discovery of coherent features, including singular generalized eigenfunctions of the pendulum and Lorenz systems, and its efficiency and robustness, even in the presence of severe noise. Finally, we summarize our main results and conclude in \cref{sec_conclusions}.  General purpose code and all the examples of this paper can be found at:
\url{https://github.com/NBoulle/MultDMD}.

\section{Background and related works}\label{sec_preliminary}

\subsection{Extended dynamic mode decomposition} \label{sec_edmd}

EDMD constructs a finite-dimensional approximation of $\mathcal{K}$ from the snapshot data in~\cref{eq_snapshot_data}. Given a dictionary $\{\psi_1,\ldots,\psi_{N}\}\subset L^2(\Omega,\omega)$, we approximate the action of $\mathcal{K}$ on the subspace $V_N=\mathrm{span}\{\psi_1,\ldots,\psi_{N}\}$ by a matrix $\kdmd\in\mathbb{C}^{N\times N}$ as
\[
	{[\mathcal{K}\psi_j](\x) = \psi_j(\Fv(\x)) \approx \sum_{i=1}^{N} (\kdmd)_{ij} \psi_i(\x)},\quad1\leq j\leq {N}.
\]
For notational convenience, define $\Psi(\x)=\begin{bmatrix}\psi_1(\x) & \cdots& \psi_{{N}}(\x) \end{bmatrix}\in\mathbb{C}^{1\times {N}}$ so that any $g\in V_{N}$ can be written as $g(\x)=\sum_{j=1}^{N}\psi_j(\x)g_j=\Psi(\x)\,\gv$ for $\gv\in\mathbb{C}^{N}$. EDMD considers
\begin{equation} \label{eq_ContinuousLeastSquaresProblem}
	\kdmd\approx\argmin_{\Kv\in\mathbb{C}^{N\times N}} \int_\Omega \left\|\Psi(\Fv(\x))\Cv^{-1} - \Psi(\x)\Kv\Cv^{-1}\right\|^2_{\ell^2}\dd\omega(\x),
\end{equation}
where $\|\cdot\|_{\ell^2}$ denotes the standard Euclidean norm of a vector, and $\Cv$ is an invertible positive self-adjoint matrix that controls the size of $g=\Psi\gv$. The integral in~\eqref{eq_ContinuousLeastSquaresProblem} cannot be evaluated directly, so it is approximated using a quadrature rule with nodes $\{\x^{(m)}\}_{m=1}^{M}$ and weights $\{w_m\}_{m=1}^{M}$.  For notational convenience, we introduce the matrices $\Wv=\mathrm{diag}(w_1,\ldots,w_{M})$ and
\begin{equation}
	\begin{split}
		\Psiv_X=\begin{pmatrix}
			        \Psi(\x^{(1)}) \\
			        \vdots         \\
			        \Psi(\x^{(M)})
		        \end{pmatrix}\in\mathbb{C}^{M\times N},\quad
		\Psiv_Y=\begin{pmatrix}
			        \Psi(\y^{(1)}) \\
			        \vdots         \\
			        \Psi(\y^{(M)})
		        \end{pmatrix}\in\mathbb{C}^{M\times N}.
		\label{psidef}
	\end{split}
\end{equation}
The discretized version of~\eqref{eq_ContinuousLeastSquaresProblem} yields the following weighted least-squares problem:
\begin{equation} \label{EDMD_opt_prob2}
	\kdmd\coloneqq \argmin_{\Kv\in\mathbb{C}^{N\times N}}\left\|\Wv^{1/2}\Psiv_Y\Cv^{-1}-\Wv^{1/2}\Psiv_X \Kv\Cv^{-1}\right\|_\F^2,
\end{equation}
whose solution is given by
$
	\kdmd= (\Wv^{1/2}\Psiv_X)^\dagger \Wv^{1/2}\Psiv_Y=(\Psiv_X^*\Wv\Psiv_X)^\dagger\Psiv_X^*\Wv\Psiv_Y,
$
where `$\dagger$' denotes the Moore--Penrose pseudoinverse. If the quadrature converges\footnote{See \cite{colbrook2023multiverse} for a discussion of when this holds.} then
\begin{equation}
	\label{quad_convergence}
	\lim_{M\rightarrow\infty}[\Psiv_X^*\Wv\Psiv_X]_{jk} = \langle \psi_k,\psi_j \rangle\quad \text{ and }\quad \lim_{M\rightarrow\infty}[\Psiv_X^*\Wv\Psiv_Y]_{jk} = \langle \mathcal{K}\psi_k,\psi_j \rangle,
\end{equation}
where $\langle \cdot,\cdot \rangle$ is the inner product associated with $L^2(\Omega,\omega)$. Hence, in the large data limit, $\kdmd=(\Psiv_X^*\Wv\Psiv_X)^\dagger\Psiv_X^*\Wv\Psiv_Y$ approaches a matrix representation of $\mathcal{P}_{V_{N}}\mathcal{K}\mathcal{P}_{V_{N}}^*$, where $\mathcal{P}_{V_{N}}$ denotes the orthogonal projection onto $V_{N}$. In essence, EDMD is a Galerkin method. The EDMD eigenvalues thus approach the spectrum of $\mathcal{P}_{V_{N}}\mathcal{K}\mathcal{P}_{V_{N}}^*$, and EDMD is an example of the so-called finite section method~\cite{bottcher1983finite}. Since the finite section method can suffer from spectral pollution (spurious modes), spectral pollution is a concern for EDMD~\cite{williams2015data}. This and other issues related to EDMD are resolved using Residual DMD \cite{colbrook2021rigorous,colbrook2023residualJFM}.

\paragraph{Adding constraints} With additional information about the dynamical system, we can often enforce physically motivated constraints in the minimization problem~\eqref{EDMD_opt_prob2}. This is explored extensively in~\cite{baddoo2023physics} for various types of systems in the context of DMD. Before discussing connections with the broader literature, we discuss the two methods most closely related to the current paper: measure-preserving EDMD and periodic approximations.

\subsection{Measure-preserving EDMD} \label{sec_mpEDMD}
Measure-preserving EDMD (mpEDMD) \cite{colbrook2023mpedmd} enforces that the EDMD approximation is measure-preserving. We can approximate the inner product $\langle \cdot,\cdot\rangle$  via the inner product induced by the matrix $\Gv=\Psiv_X^*\Wv\Psiv_X$ as
\begin{equation}
	\label{inner_product_G}
	\hv^*\Gv\gv=\sum_{j,k=1}^N  \overline{h_j}g_kG_{j,k}\approx \sum_{j,k=1}^N  \overline{h_j}g_k\langle \psi_k,\psi_j \rangle=\langle \Psi \gv,\Psi \hv \rangle.
\end{equation}
If the convergence in~\eqref{quad_convergence} holds, then $\lim_{M\rightarrow\infty}\hv^*\Gv\gv=\langle \Psi \gv,\Psi \hv \rangle$. Hence, $\|\Psi \gv\|^2\approx \gv^*\Gv\gv$ and $\|\Psi \Kv\,\gv\|^2\approx \gv^*\Kv^*\Gv\Kv\gv$. Since $\mathcal{K}$ is an isometry, $\|\K g\|^2=\|g\|^2$ and we enforce $\Kv^*\Gv\Kv=\Gv$. We set $\Cv=\Gv^{1/2}$ and replace~\eqref{EDMD_opt_prob2} by
\begin{equation}
	\label{mpEDMD_opt_prob}
	\kmpd\coloneqq \underset{\substack{\Kv^*\Gv\Kv=\Gv\\\Kv\in\mathbb{C}^{N\times N}}}{\argmin}\left\|\Wv^{1/2}\Psiv_Y\Gv^{-1/2}-\Wv^{1/2}\Psiv_X \Kv\Gv^{-1/2}\right\|_\F^2,
\end{equation}
which can be solved using an SVD. Therefore, we enforce that the Galerkin approximation is an isometry with respect to the learned inner product induced by $\Gv$. The eigendecomposition of $\kmpd$ converges to the spectral quantities of Koopman operators for general measure-preserving dynamical systems (see the discussion about weak convergence later in \cref{sec:weak_conv}). This result fundamentally depends on the additional constraint in~\eqref{mpEDMD_opt_prob}. Other benefits include increased robustness to noise and conservation of key properties (e.g., the energy if this is what $\omega$ measures).

\subsection{Periodic approximations} \label{sec_periodic}
While mpEDMD allows for a generic dictionary, an alternative for constraining EDMD involves selecting a dictionary that simplifies integrating constraints into the optimization problem. As an example, \cite{govindarajan2019approximation,govindarajan2021approximation} offer a method akin to the Ulam approximation \cite{ulam1960collection,li1976finite} for the Perron--Frobenius operator, constructing \textit{periodic approximations} of Koopman operators under specific conditions: compact $\Omega$, $\omega$ being absolutely continuous with respect to Lebesgue measure, and the system being both measure-preserving and invertible. This approach has been adapted into an algorithm for tori systems and extended to continuous-time systems \cite{govindarajan2021approximation}, relying on state-space partitioning to approximate the Koopman operator's dynamics through a permutation. This method yields measures that converge weakly to the spectral measures of the Koopman operator. Furthermore, periodic approximations are positive operators and uphold the multiplicative structure of the Koopman operator. However, generalizing these results to systems beyond those with invariant Lebesgue absolutely continuous measures, such as chaotic attractors, and developing efficient high-dimensional schemes remain open challenges.

\subsection{Other methods}
The Koopman generator is skew-adjoint for continuous-time, invertible, measure-preserving systems, and various methods have been developed to approximate these generators. The authors of \cite{das2021reproducing} introduce an approach using a one-parameter family of reproducing kernels to approximate Koopman eigenvalues and eigenfunctions within the $\epsilon$-pseudospectrum, where $\epsilon$ is determined by a Dirichlet energy functional. Recently, \cite{valva2023consistent} combines this compactification strategy with the integral representation of the resolvent \cite{susuki2021koopman} to create a compact operator that approximates the resolvent of a skew-adjoint operator. This leads to skew-adjoint unbounded operators with compact resolvents whose spectral measures weakly converge to those of the Koopman generator. Other structure-preserving DMD methods include naturally structured DMD~\cite{huang2018data}, DMD for dynamical systems with symmetries characterized by a finite group~\cite{salova2019koopman}, Lagrangian DMD~\cite{lu2020lagrangian}, Port-Hamiltonian DMD~\cite{morandin2023port}, symmetric DMD~\cite{cohen2020mode}, constrained DMD~\cite{krake2022constrained}. Recently in~\cite{boulle2024convergence}, the authors proved that Hermitian DMD~\cite{baddoo2023physics,drmavc2024hermitian} converges for Hermitian Koopman operators, as well as skew-Hermitian operators using an appropriate multiplication by $i$.

\section{Multiplicative dynamic mode decomposition (MultDMD)} \label{sec_multiplicative}

We aim to construct an approximation of the Koopman operator, $\kmult$, on the finite-dimensional subspace $V_N=\mathrm{span}\{\psi_1,\ldots,\psi_N\}$ that maintains the multiplicative property \cref{eq_koopman_mult}. This involves assuming $V_N$ is closed under multiplication and ensuring $\kmult$ upholds this property.

\subsection{Optimization problem and algorithm} \label{sec_optimization}

The multiplicative property \cref{eq_koopman_mult} leads to a natural choice of basis functions as piecewise constant functions with disjoint support, \emph{i.e.}, a discontinuous Galerkin approximation of degree zero of the Koopman operator. We set
$$
	\psi_j(\x) = \chi_{S_j}(\x)=\begin{cases} 1,\quad\text{if }\x\in S_j, \\
		0,\quad \text{otherwise,}
	\end{cases}
$$
where $S_1,\ldots,S_N\subset\mathbb{R}^d$ are disjoint, $\Omega\subset \cup_{j=1}^NS_j$ and $0<\omega(S_j\cap\Omega)<\infty$. We can view each function $\psi_j$ as a member of $L^2(\Omega,\omega)$, even if $S_j\backslash \Omega\neq\emptyset$. This choice ensures that
$$
	f(\x) = \Psi(\x) \fv,\quad g(\x) = \Psi(\x)\gv, \quad f(\x)g(\x) = \Psi(\x)(\fv \odot\gv), \quad f,g\in V_N,\quad \x\in\Omega.
$$
Here, $\odot$ denotes the Hadamard product $(\fv \odot \gv)_j = f_j g_j$ for $\fv,\gv\in\mathbb{C}^N$. To preserve the multiplicative structure at the discrete level, we are therefore led to the constraint
$$
	\Psi(\x)(\Kv \fv\odot \gv)\approx [\K(f g)](\x)  =[\K f](\x)[\K g](\x)   \approx  \Psi(\x)[( \Kv\fv) \odot (\Kv \gv)].
$$
Hence, we adapt EDMD to approximate $\K$ by a matrix $\kmult \in \C^{N\times N}$ satisfying
\begin{equation} \label{eq_mult_matrix}
	\kmult [\fv\odot \gv]=(\kmult \fv)\odot (\kmult \gv),\quad \fv,\gv\in \C^N.
\end{equation}
Since the supports of the basis functions are disjoint, the Gram matrix $\Gv$ is diagonal with
\[
	\Gv=\diag(G_1,\ldots, G_N),\quad G_{i}=\sum_{\x^{(m)}\in S_i} w_m.
\]
Without loss of generality, we assume that each set $S_i$ contains at least one data point $\x^{(m)}$ and that the weights $\omega_m$ are positive, consistent with the assumption that $\omega(S_j\cap\Omega)>0$.
Hence, the diagonal entries of $\Gv$ are strictly positive.
Setting $\Cv=\Gv^{1/2}$ and adding the constraint \cref{eq_mult_matrix} in~\cref{EDMD_opt_prob2}, we obtain the following constrained least-squares problem:
\begin{equation} \label{eq_constraint_mult}
	\kmult\coloneqq\argmin_{\substack{\Kv\in \mathbb{C}^{N\times N}\\\Kv\text{ satisfies }\cref{eq_mult_matrix}}}\left\|\Wv^{1/2}\Psiv_Y\Gv^{-1/2}-\Wv^{1/2}\Psiv_X \Kv\Gv^{-1/2}\right\|_\F^2.
\end{equation}

\begin{example}[Relaxation of the permutation recovery problem]
	Permutation matrices satisfy the constraints in \cref{eq_mult_matrix}, and \cref{eq_constraint_mult} is related to the problem of finding a permutation matrix that minimizes the distance to a given matrix, known as permutation recovery problems. This problem is notoriously challenging to solve due to being NP-hard~\cite{emiya2014compressed,ma2021optimal,pananjady2017linear,unnikrishnan2015unlabeled}. Hence, we may view \cref{eq_constraint_mult} as a relaxation, where we do not assume that $\kmult$ is a permutation matrix. This ensures that the optimization problem can be solved efficiently.
\end{example}

\renewcommand{\algorithmicrequire}{\textbf{Input:}}
\renewcommand{\algorithmicensure}{\textbf{Output:}}
\begin{algorithm}[t!]
	\caption{MultDMD algorithm.}\label{alg_constraint}
	\begin{algorithmic}[1]
		\Require Piecewise constant basis functions $\Psi_1,\ldots,\Psi_N$ with disjoint support, data points $\x^{(1)},\ldots,\x^{(M)}, \y^{(1)},\ldots,\y^{(M)}\in \Omega$, and quadrature weights $\omega_1,\ldots,\omega_M$.
		\Ensure Matrix $\kmult\in \R^{N\times N}$ solving \cref{eq_constraint_mult}.
		\State Initialize the $N\times N$ matrices $\kmult$ and $\omega$ to zero.
		\For{$m=1,\ldots,M$}
		\State Find the indices $i_m$, $j_m$ such that $\psi_{i_m}(\x^{(m)}) = \psi_{j_m}(\y^{(m)})=1$.
		\State $\omega_{i_m,j_m} \gets \omega_{i_m,j_m} + w_m$
		\EndFor
		\State Compute the diagonal entries of the Gram matrix $G_j\gets\sum_{k=1}^N \omega_{j,k}$.
		\For{$i=1,\ldots,N$}
		\State $j_0 \gets \argmin_{1\leq j\leq N} (G_i-2\omega_{i,j})/G_j$.
		\State $[\kmult]_{i,j_0} \gets 1$.
		\EndFor
	\end{algorithmic}
\end{algorithm}

We propose an efficient algorithm, summarized in~\cref{alg_constraint}, to solve \cref{eq_constraint_mult} by characterizing the manifold of matrices satisfying the constraint \cref{eq_mult_matrix}. First, let $\fv=\bm e_i$ and $\gv=\bm e_j$ where $1\leq i\leq j\leq N$, and $\Kv\in \C^{N\times N}$ satisfying \cref{eq_mult_matrix}. Then, we have
$$
	\Kv \bm 1_{i,j}=(\Kv \bm e_i)\circ(\Kv \bm e_j),
$$
where $\bm 1_{i,j}(k) = 1$ if $k=i=j$ and $0$ otherwise. Taking $i=j$ leads to the constraints
\[
	\begin{bmatrix}
		K_{1,i}^2 \\
		\vdots    \\
		K_{N,i}^2
	\end{bmatrix}
	=
	\begin{bmatrix}
		K_{1,i} \\
		\vdots  \\
		K_{N,i}
	\end{bmatrix}, \quad 1\leq i\leq N,
\]
i.e., $K_{i,j}\in \{0,1\}$ for $1\leq i,j\leq N$.
While for any $i\neq j$, we have
\[
	\begin{bmatrix}
		K_{1,i}K_{1,j} \\
		\vdots         \\
		K_{N,i}K_{N,j}
	\end{bmatrix}
	=
	\begin{bmatrix}
		0      \\
		\vdots \\
		0
	\end{bmatrix}.
\]
Therefore, each row of $\Kv$ contains at most one non-zero element equal to one. For example, the following three matrices satisfy the constraint:
$$
	\begin{bmatrix}
		1 & 0 & 0 & 0 \\
		1 & 0 & 0 & 0 \\
		1 & 0 & 0 & 0 \\
		1 & 0 & 0 & 0 \\
	\end{bmatrix},
	\quad
	\begin{bmatrix}
		0 & 0 & 0 & 0 \\
		0 & 0 & 1 & 0 \\
		0 & 0 & 1 & 0 \\
		1 & 0 & 0 & 0 \\
	\end{bmatrix},
	\quad
	\begin{bmatrix}
		1 & 0 & 0 & 0 \\
		0 & 0 & 1 & 0 \\
		0 & 0 & 0 & 1 \\
		0 & 1 & 0 & 0 \\
	\end{bmatrix}.
$$
The set of matrices satisfying the constraint in \cref{eq_mult_matrix} consists of those with entries of either $0$ or $1$, each row containing at most one non-zero entry. Hence, \eqref{eq_constraint_mult} is equivalent to
\begin{equation} \label{eq_constraint_row_kdmd}
	\kmult=\argmin_{\substack{\Kv\in \{0,1\}^{N\times N}\\\text{rows have at most one 1}}} \sum_{m=1}^M\omega_m\sum_{j=1}^N\frac{1}{G_j}\left[\psi_j(\y^{(m)})-\Psi(\x^{(m)})K_j\right]^2,
\end{equation}
where $K_j$ denotes the $j$th column of $\Kv$. Let $1\leq m\leq M$, since the basis $\Psi$ is piecewise constant with disjoint support, there exists a unique set $S_{j_m}$, with $\y^{(m)}\in S_{j_m}$. Hence,
\[
	\psi_{j}(\y^{(m)})=\delta_{j,j_m}, \quad 1\leq j\leq N,
\]
where $\delta_{j,j_m}=1$ if $j=j_m$ and $0$ otherwise. Similarly, there exists $1\leq i_m\leq N$ such that $\psi_{i}(\x^{(m)})=\delta_{i,i_m}$. It follows that \cref{eq_constraint_row_kdmd} is equivalent to
\begin{equation} \label{eq_min_prob_B}
	\kmult=\argmin_{\substack{\Kv\in \{0,1\}^{N\times N}\\\text{rows have at most one 1}}} \sum_{m=1}^M\omega_m\left(\frac{1}{G_{j_m}}[1-K_{i_m,j_m}]^2+\sum_{j\neq j_m}\frac{1}{G_{j}}K_{i_m,j}^2\right).
\end{equation}
We introduce the matrix $\omega\in \R^{N\times N}$ defined as
\begin{equation} \label{eq_omega_ij}
	\omega_{i,j}=\sum_{\substack{m=1\\i_m=i,j_m=j}}^M\omega_m \quad \text{and}\quad G_j=\sum_{k=1}^N\omega_{j,k}
\end{equation}
so that \cref{eq_min_prob_B} is equivalent to
$$
	\kmult=\argmin_{\substack{\Kv\in \{0,1\}^{N\times N}\\\text{rows have at most one 1}}}\sum_{i=1}^N\sum_{j=1}^N \omega_{i,j}\left(\frac{1}{G_{j}}[1-K_{i,j}]^2+\sum_{\tilde{j}\neq j}\frac{1}{G_{\tilde{j}}}K_{i,\tilde{j}}^2\right).
$$
This problem decouples along each row, and for each $1\leq i\leq N$, we have the problem
$$
	\min_{\substack{K_{i,:}\in \{0,1\}^{N}\\\text{at most one 1}}} \sum_{j=1}^N \omega_{i,j}\left(\frac{1}{G_{j}}[1-K_{i,j}]^2+\sum_{\tilde{j}\neq j}\frac{1}{G_{\tilde{j}}}K_{i,\tilde{j}}^2\right).
$$
Suppose that there exists $j_0$ with $K_{i,j_0}=1$. Then, the above becomes
$$
	\min_{1\leq j_0\leq N}\left\{ \sum_{j\neq j_0} \omega_{i,j}\left(\frac{1}{G_{j}}+\frac{1}{G_{j_0}}\right)=\frac{1}{G_{j_0}}\left(-\omega_{i,j_0}+\sum_{j\neq j_0} \omega_{i,j}\right)+\sum_{j=1}^N \omega_{i,j}\frac{1}{G_j}\right\}.
$$
Since $\sum_{j=1}^N \omega_{i,j}\frac{1}{G_j}$ is independent of $j_0$, we arrive at
$$
	\min_{1\leq j_0\leq N}\frac{1}{G_{j_0}}\left(G_i-2\omega_{i,j_0}\right),\quad\text{with }G_i = \sum_{j=1}^N\omega_{i,j}.
$$
This problem requires $\mathcal{O}(M+N^2)$ operations to compute the optimal $\Kv$, which is the same complexity as standard EDMD for our choice of basis because the Gram matrix $\Psiv_X^*\Wv\Psiv_X$ is diagonal. However, MultDMD is often computationally faster in practice (see~\cref{sec_examples_MultDMD}) since it returns a sparse matrix $\kmult$ with at most $N$ non-zero elements, while $\kdmd$ is dense and does not enforce the multiplicative property of $\K$.

\subsection{Choice of basis functions}

The selection of a dictionary prompts interesting questions about the placement of support elements for $\psi_1,\ldots,\psi_N$ and how this affects the accuracy of approximating the map $\Fv:\Omega\to\Omega$ in \cref{eq_dyn_system}. There exists extensive research on the optimal arrangement of nodes for both interpolation and the approximation of continuous functions via piecewise discontinuous functions~\cite{baines1994algorithms,barrow1978unicity,de1973good,de2006good,du2005anisotropic,pittman2000fitting,tourigny1997analysis}.

We perform a Vorono\"i tesselation~\cite{aurenhammer1991voronoi} into $N$ cells, $S_1, \ldots, S_N$ and select the basis functions as indicator functions of these sets. The centroids $\muv_1,\ldots,\muv_N$ of the sets, with respect to the Euclidean distance, are chosen to minimize the following objective function:
\begin{equation} \label{eq_center_points}
	\min_{S_1,\ldots,S_N}\sum_{i=1}^N\sum_{\x^{(m)}\in S_i}\|\x^{(m)}-\muv_i\|_2^2,\quad\muv_i = \frac{1}{|S_i|}\sum_{\x^{(m)}\in S_i}\x^{(m)}.
\end{equation}
The clustering is performed using the $k$-means algorithm~\cite{lloyd1982least}, but one could also use a deep neural network to find the centroids' locations and distance~\cite{xie2016unsupervised}.

\begin{figure}[htbp]
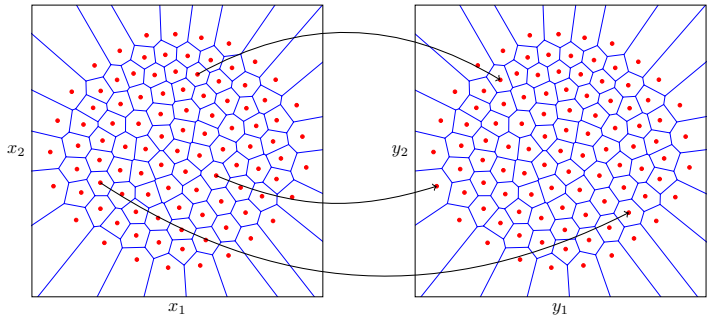

	\centering
	\begin{overpic}[width=0.6\textwidth]{Figure/Example_1.pdf}
	\end{overpic}
	\caption{Vorono{\"i} tessellation of the state space $\Omega$ computed by the $k$-means clustering algorithm using the data points $\x^{(1)},\ldots\x^{(m)}$. The basis functions are piecewise constant and locally supported on each Vorono\"i cell, whose centroid is highlighted by a red dot. The arrows represent the action of the Koopman operator.}
	\label{fig_voronoi}
\end{figure}

As an illustrative example, \cref{fig_voronoi} shows a Vorono{\"i} tesselation of the unit square and the resulting piecewise constant basis functions supported by the partition. The Koopman operator $\K$ is approximated by a matrix $\kmult$ mapping each basis function to at most one other basis function, which is represented by the arrows in \cref{fig_voronoi}. While \cref{eq_center_points} might be computationally expensive to solve for a large number of data points, one could compute centroids for a subsampled set of data points (e.g., by randomly or uniformly subsampling $\{\x^{(m)}\}$) and use the resulting partition to compute $\kmult$ for the full set of data points.

\paragraph{Dealing with high dimensions}
Identifying a suitable set of centroids and basis functions become very challenging in high spatial dimensions as clustering techniques usually suffer from the ``curse of dimensionality'', due to distances between points become less meaningful as the spatial dimension increases~\cite{zimek2012survey}. To address these, one could change the distance function to a more suitable one, or employ another clustering algorithm, such as mapping to a lower-dimensional space with principal component analysis~\cite{kriegel2009clustering}. In two of the examples below, we handle very high-dimensional systems by projecting onto POD modes. Another option is to project onto kernelized POD modes, similar to the approach used in kernelized EDMD~\cite{williams2015kernel}.

\section{Convergence analysis}\label{sec_convergence}

To motivate MultDMD theoretically, we provide a simple, sufficient condition for the convergence of DMD methods for unitary Koopman operators.

\subsection{Spectral measures}
Since $\K$ is unitary, the spectral theorem \cite[Thm.~X.4.11]{conway2019course} allows us to diagonalize $\mathcal{K}$ through a  \textit{projection-valued measure} $\mathcal{E}$. For each Borel measurable $S\subset\mathbb{T}$, $\mathcal{E}(S)$ is a projection onto the spectral elements of $\mathcal{K}$ inside $S$. Moreover,
\[
	g=\left(\int_\mathbb{T} 1\dd\mathcal{E}(\lambda)\right)g \quad\text{and}\quad \mathcal{K}g=\left(\int_\mathbb{T} \lambda\dd\mathcal{E}(\lambda)\right)g,\quad g\in L^2(\Omega,\omega).
\]
This formula decomposes $g$ based on the spectral content of $\K$. The projection-valued measure $\mathcal{E}$ simultaneously decomposes the space $L^2(\Omega,\omega)$ and diagonalizes the Koopman operator, akin to a custom Fourier-type transform that identifies coherent features. Scalar-valued spectral measures are of interest: for a normalized observable $g\in L^2(\Omega,\omega)$ with $\|g\| = 1$, the probability measure $\mu_g$ is defined as
$	\mu_g(S)=\langle \mathcal{E}(S)g,g \rangle.$
It is convenient to parametrize $\mathbb{T}$ using angle coordinates.
By changing variables to $\lambda=\exp(i\theta)$, measures $\xi_g$ on the interval $\pp$ are considered, with $\dd\mu_g(\lambda)=\dd\xi_g(\theta)$. This approach is applied to the projection-valued spectral measure, continuing to denote it as $\mathcal{E}$. The moments of the measure $\mu_g$ are the correlations
\begin{equation} \label{eq_correlations_moments}
	\langle \mathcal{K}^n g,g\rangle=\int_\mathbb{T} \lambda^n \dd\mu_g(\lambda)=\int_\pp e^{in\theta}\dd \xi_g(\theta),\quad n\in\mathbb{Z}.
\end{equation}
For example, if our system corresponds to dynamics on an attractor, these statistical properties enable the comparisons of complex dynamics~\cite{mezic2004comparison}. More generally, the spectral measure of $\mathcal{K}$ with respect to $g\in L^2(\Omega,\omega)$ is a signature for the forward-time dynamics of \cref{eq_dyn_system}.

\subsection{Weak convergence of spectral measures} \label{sec:weak_conv}
The convergence of matrix approximations of $\K$ is best analyzed through the convergence of spectral measures, which encode the spectral content of $\K$. Sometimes, approximations involve spaces outside of $\mathcal{H}=L^2(\Omega,\omega)$, like in DMD methods where approximations of the measure $\omega$ are derived from data. Consider a sequence of unitary operators $\{\mathcal{K}_n\}$ on finite-dimensional Hilbert spaces $\{\mathcal{H}_n\}$, where both $\mathcal{H}_n$ and $\mathcal{H}$ are closed subspaces of a larger Hilbert space $\mathcal{X}$. Denoting orthogonal projections as $\mathcal{P}_n:\mathcal{X}\rightarrow \mathcal{H}_n$ and $\mathcal{P}:\mathcal{X}\rightarrow \mathcal{H}$, we assume $\mathcal{P}_n^*\mathcal{P}_n$ strongly converges to $\mathcal{P}^*\mathcal{P}$ as $n\rightarrow\infty$. Given $g\in\mathcal{H}$ with $\|g\|=1$, we assume that $\mathcal{P}_ng\neq 0$ and let $\mathcal{E}_n$ be the projection-valued spectral measure of $\mathcal{K}_n$ and $\xi_{g,n}$ be the scalar-valued spectral measure of $\mathcal{K}_n$ with respect to $\mathcal{P}_ng/\|\mathcal{P}_ng\|$.

\subsubsection{Definitions}

We consider weak convergence of spectral measures defined as follows.

\begin{definition}[Weak convergence of measures]
	A sequence of Borel measures $\{\beta_n\}_{n\in\mathbb{N}}$ on $\pp$ \textit{converges weakly} to a Borel measure $\beta$ on $\pp$ if
	\begin{equation}\label{eq_meaning_weak}
		\lim_{n\rightarrow\infty}\int_\pp \phi(\varphi)\dd\beta_n(\varphi)= \int_\pp\phi(\varphi)\dd\beta(\varphi),\quad \forall\text{ continuous }\phi:\pp\rightarrow\mathbb{C}.
	\end{equation}
\end{definition}

The analog of weak convergence for projection-valued spectral measures is the functional calculus. If $F:\mathbb{T}\rightarrow\mathbb{C}$ is a bounded Borel function, then
\begin{equation}
	\label{eq_FC_def}
	F(\mathcal{K})=\int_{\pp} F(e^{i\varphi})\dd \mathcal{E}(\varphi)=\int_\mathbb{T}F(\lambda)\dd \mathcal{E}(\lambda).
\end{equation}

\begin{definition}[Functional calculus convergence]
	\label{def:FC_convergence}
	Given the above, $\mathcal{E}_n$ converges in the functional calculus sense to $\mathcal{E}$ if for any $g\in\mathcal{H}$ and continuous function $\phi:\pp\rightarrow\mathbb{C}$,
	\begin{equation}\label{eq_meaning_weak_proj_val}
		\lim_{n\rightarrow\infty}\underbrace{\int_\pp \phi(\varphi)\dd{\mathcal{E}}_n(\varphi)\mathcal{P}_ng}_{\phi(\log(\mathcal{K}_n)/i)\mathcal{P}_ng}= \underbrace{\int_\pp\phi(\varphi)\dd\mathcal{E}(\varphi)g}_{\phi(\log(\mathcal{K})/i)g}.
	\end{equation}
\end{definition}
The choice $\phi(\varphi)=\exp(im\varphi)$ yields convergence of $\mathcal{K}_n^m\mathcal{P}_ng$ to $\mathcal{K}^mg$ and hence the convergence of the Koopman mode decomposition for forecasting. The Portmanteau theorem \cite[Thm.~13.16]{klenke2013probability} yields the following corollaries.

\begin{corollary}
	\label{cor:spec_invs_weak}
	Suppose that $\xi_{g,n}$ converges weakly to $\xi_{g}$ for any $g\in\mathcal{H}$. If $U\subset\mathbb{T}$ is an open set with $U\cap\spec(\mathcal{K})\neq\emptyset$, then there exists $N\in\mathbb{N}$ such that if $n\geq N$ then $U\cap\spec(\mathcal{K}_n)\neq\emptyset$.
\end{corollary}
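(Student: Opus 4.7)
The plan is to reduce the set-theoretic spectral statement to a scalar-valued convergence statement and then apply the open-set direction of the Portmanteau theorem. Concretely, I would manufacture a test observable $g\in\mathcal{H}$ whose scalar spectral measure $\xi_g$ puts positive mass on the angle-coordinate image of $U$, and then use weak convergence of $\xi_{g,n}$ to force $\xi_{g,n}$ to be positive on that set eventually, which transfers back into a spectral point of $\mathcal{K}_n$ inside $U$.

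\textbf{Step 1: build a charging observable.} Since $\mathcal{K}$ is unitary, the support of its projection-valued spectral measure $\mathcal{E}$ equals $\spec(\mathcal{K})$. As $U\subset\mathbb{T}$ is open and meets $\spec(\mathcal{K})$, this forces $\mathcal{E}(U)\neq 0$. I then choose any unit vector $g$ in the range of $\mathcal{E}(U)\subseteq\mathcal{H}$, so that $\mathcal{E}(U)g=g$ and hence $\mu_g(U)=\langle\mathcal{E}(U)g,g\rangle=1$. Setting $V=\{\varphi\in\pp:e^{i\varphi}\in U\}$ (open in $\pp$), the change of variables $\lambda=e^{i\varphi}$ gives $\xi_g(V)=\mu_g(U)=1>0$.

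\textbf{Step 2: Portmanteau on open sets.} By hypothesis, $\xi_{g,n}\to\xi_g$ weakly, so the open-set half of the Portmanteau theorem \cite[Thm.~13.16]{klenke2013probability} yields
\[
\liminf_{n\to\infty}\xi_{g,n}(V)\;\geq\;\xi_g(V)\;>\;0.
\]
Hence there exists $N\in\mathbb{N}$ with $\xi_{g,n}(V)>0$ for all $n\geq N$. Because $\xi_{g,n}$ is supported on the angle-coordinate image of $\spec(\mathcal{K}_n)$, positivity of $\xi_{g,n}(V)$ immediately implies $U\cap\spec(\mathcal{K}_n)\neq\emptyset$, which is the desired conclusion.

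\textbf{Main obstacle.} The only non-routine ingredient is Step 1: one needs that $\spec(\mathcal{K})$ coincides with the support of $\mathcal{E}$ so that every open neighborhood of a spectral point is charged by $\mathcal{E}$, and then that the range of $\mathcal{E}(U)$ actually contains a unit vector of $\mathcal{H}$ on which the weak-convergence hypothesis applies. Both are immediate from the spectral theorem for unitary operators, so the proof really reduces to careful bookkeeping. The one subtlety worth flagging is that $\xi_{g,n}$ requires $\mathcal{P}_n g\neq 0$ to be defined, but this is already built into the hypothesis that $\xi_{g,n}\to\xi_g$ for every $g\in\mathcal{H}$ (holding at least for all sufficiently large $n$), so no additional argument is needed.
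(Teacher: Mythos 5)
Your proof is correct and takes the same route the paper intends: the paper simply states that the Portmanteau theorem yields this corollary, and your argument—charging an open set via a unit vector in the range of $\mathcal{E}(U)$ (using that $\spec(\mathcal{K})$ is the support of $\mathcal{E}$ for a unitary operator) and then applying the open-set $\liminf$ inequality—is exactly the standard way to fill in that citation. The bookkeeping on normalization and on $\mathcal{P}_n g\neq 0$ matches the paper's setup, so nothing further is needed.
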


\begin{corollary}
	\label{cor:spec_poll_weak}
	Let $E\subset\pp$ be closed with $\exp(iE)\cap\spec(\mathcal{K})=\emptyset$, then
	\begin{enumerate}
		\item[(i)] The weak convergence of $\xi_{g,n}$ to $\xi_{g}$ implies $\lim_{n\rightarrow\infty}\xi_{g,n}(E)=0$;
		\item[(ii)] The convergence $\mathcal{E}_n$ in the functional calculus sense to $\mathcal{E}$ implies
		      \[
			      \lim_{n\rightarrow\infty} \int_E 1\dd{\mathcal{E}}_n(\varphi)\mathcal{P}_ng= 0,\quad \text{for all }g\in\mathcal{H}=L^2(\Omega,\omega).
		      \]
	\end{enumerate}
\end{corollary}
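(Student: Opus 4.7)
The plan is to exploit the fact that both the scalar spectral measure $\xi_g$ and the projection-valued spectral measure $\mathcal{E}$ of $\mathcal{K}$ are supported on the angle preimage of $\spec(\mathcal{K})$, so the hypothesis $\exp(iE)\cap\spec(\mathcal{K})=\emptyset$ immediately gives $\xi_g(E)=0$ (and $\mathcal{E}(E)=0$ as an operator). The work is then to transfer these vanishing statements to the $n\to\infty$ limit through the appropriate notion of convergence, as suggested by the reference to the Portmanteau theorem.

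For (i), I would invoke the Portmanteau theorem for weak convergence of finite Borel measures on the compact space $\pp$: for every closed $F\subset\pp$ one has $\limsup_{n\to\infty}\xi_{g,n}(F)\leq\xi_g(F)$. Applied to $F=E$, combined with $\xi_g(E)=0$ and non-negativity of $\xi_{g,n}$, this immediately yields $\lim_{n\to\infty}\xi_{g,n}(E)=0$.

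For (ii), the analog of the Portmanteau inequality must be built by hand because \cref{def:FC_convergence} only tests against continuous functions. Since $E$ and $A := \{\varphi\in\pp : e^{i\varphi}\in\spec(\mathcal{K})\}$ are disjoint closed subsets of the compact metric space $\pp$, Urysohn's lemma supplies a continuous $\phi:\pp\to[0,1]$ with $\phi\equiv 1$ on $E$ and $\phi\equiv 0$ on $A$. Because $\mathcal{E}$ is supported on $A$ and $\phi|_A=0$, we have $\phi(\mathcal{K})g=0$, so \cref{def:FC_convergence} gives $\phi(\mathcal{K}_n)\mathcal{P}_n g\to 0$ in norm. Since $\mathcal{E}_n(E)$ is an orthogonal projection (as $\mathcal{K}_n$ is unitary), the pointwise bound $\chi_E\leq\phi^2$ yields
\[
\bigl\|\mathcal{E}_n(E)\mathcal{P}_n g\bigr\|^2 = \int_\pp \chi_E(\varphi)\dd\langle\mathcal{E}_n(\varphi)\mathcal{P}_n g,\mathcal{P}_n g\rangle \leq \int_\pp \phi(\varphi)^2\dd\langle\mathcal{E}_n(\varphi)\mathcal{P}_n g,\mathcal{P}_n g\rangle = \bigl\|\phi(\mathcal{K}_n)\mathcal{P}_n g\bigr\|^2,
\]
which tends to zero, proving the claim.

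The main obstacle is the reduction in (ii) from an indicator-function statement to a continuous-function statement. Care must be taken with the periodic identification of endpoints $\pm\pi$: one should treat $\pp$ as the compact circle $\mathbb{T}$ so that the Urysohn function is genuinely continuous where the functional calculus convergence requires it, and so that $A$ is closed even when $\spec(\mathcal{K})$ meets $\{-1\}$. Everything else is a routine combination of the spectral theorem and the hypothesis on the test space.
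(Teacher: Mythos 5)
Your proof is correct and follows essentially the same route as the paper, which derives both parts from the Portmanteau theorem: part (i) is the closed-set inequality $\limsup_n\xi_{g,n}(E)\leq\xi_g(E)=0$ applied directly, and your Urysohn-function argument for part (ii) is precisely the standard projection-valued adaptation of that inequality that the paper leaves implicit in its one-line citation. Your attention to the support of $\mathcal{E}$, the projection identity $\|\mathcal{E}_n(E)\mathcal{P}_ng\|^2=\int_E 1\dd\langle\mathcal{E}_n(\varphi)\mathcal{P}_ng,\mathcal{P}_ng\rangle$, and the periodic identification of $\pm\pi$ are all sound.
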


\cref{cor:spec_invs_weak} says that the discretization does not miss any parts of the spectrum, whereas \cref{cor:spec_poll_weak} ensures that the contribution of spurious eigenvalues diminishes as the measures converge. Weak convergence and convergence of the functional calculus are highly desirable and encompass important dynamical convergence and consistency.

\subsubsection{A simple sufficient condition}

The following theorem provides a sufficient condition for the convergence of spectral measures and the functional calculus under the condition that the approximation operators are unitary. Here, $T_n\xrightarrow[]{w}T$ means that $T_n$ converges weakly to $T$, i.e., $\lim_{n\rightarrow\infty}\langle T_n v,w\rangle=\langle T v,w\rangle$ for all $v,w\in \mathcal{X}$, while $T_n\xrightarrow[]{s}T$ means that $T_n$ converges strongly to $T$, i.e., $\lim_{n\rightarrow\infty} T_n v=T v$ for all $v\in \mathcal{X}$.

\begin{theorem}
	\label{thm_conv}
	Suppose that $\K$ and $\{\K_n\}$ are unitary and that $\mathcal{K}_n\mathcal{P}_n\xrightarrow[]{w}\mathcal{K}\mathcal{P}$. Then $\xi_{g,n}$ converges weakly to $\xi_g$ and $\mathcal{E}_n$ converges in the functional calculus sense to $\mathcal{E}$ as $n\rightarrow\infty$.
\end{theorem}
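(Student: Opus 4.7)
The plan is to reduce both conclusions to the single strong-convergence statement $\K_n^m\p_n g\to \K^m g$ in $\mathcal{X}$, for every normalized $g\in\mathcal{H}$ and every $m\in\mathbb{Z}$. Once this is established, the moments of $\xi_{g,n}$, namely $\langle\K_n^m\p_n g,\p_n g\rangle/\|\p_n g\|^2$, converge to the corresponding moments $\langle\K^m g,g\rangle$ of $\xi_g$, since the denominator tends to $1$ by the standing assumption $\p_n^*\p_n\xrightarrow[]{s}\p^*\p$. Because trigonometric polynomials are dense in $C(\pp)$ by Stone--Weierstrass and the $\xi_{g,n}$ are probability measures (uniformly bounded in total variation), moment convergence upgrades to weak convergence in the sense of \cref{eq_meaning_weak}. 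The same density argument, together with the uniform bound $\|\K_n^m\|=1$, promotes strong convergence on trigonometric polynomials $\phi$ to strong convergence on arbitrary continuous $\phi:\pp\to\mathbb{C}$, which is exactly \cref{def:FC_convergence}.

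To prove the strong-convergence statement, I would work with the compressed operators $\widetilde{\K}_n:=\p_n^*\K_n\p_n$ on $\mathcal{X}$ and their limit $\widetilde{\K}:=\p^*\K\p$. The identity $\p_n\p_n^*=I_{\mathcal{H}_n}$ yields $\widetilde{\K}_n^m=\p_n^*\K_n^m\p_n$, while $\K$ preserving $\mathcal{H}$ gives $\widetilde{\K}^m g=\K^m g$ for any $g\in\mathcal{H}$, so it suffices to show $\widetilde{\K}_n^m g\to\K^m g$ strongly in $\mathcal{X}$. I would proceed by induction on $m\geq 0$. The base case $m=1$ combines the hypothesis, which yields weak convergence $\widetilde{\K}_n g\xrightarrow[]{w}\K g$, with the norm identity $\|\widetilde{\K}_n g\|=\|\p_n g\|\to\|g\|=\|\K g\|$ obtained from unitarity of $\K_n$ on $\mathcal{H}_n$ and the assumed strong convergence $\p_n^*\p_n\xrightarrow[]{s}\p^*\p$; in a Hilbert space, weak convergence together with convergence of norms implies strong convergence. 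For the inductive step, I would split
\[
\widetilde{\K}_n^m g-\K^m g=\widetilde{\K}_n\bigl(\widetilde{\K}_n^{m-1}g-\K^{m-1}g\bigr)+\bigl(\widetilde{\K}_n-\widetilde{\K}\bigr)\K^{m-1}g,
\]
where the first summand is controlled in norm by $\|\widetilde{\K}_n\|\leq 1$ and the inductive hypothesis, while the second is the base case applied to $\K^{m-1}g\in\mathcal{H}$. Negative powers are treated by taking adjoints, using $\K_n^{-1}=\K_n^*$ and the fact that weak operator convergence passes to adjoints.

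The main obstacle is that weak operator convergence is not in general preserved under products, so the naive assertion $\widetilde{\K}_n^m\xrightarrow[]{w}\widetilde{\K}^m$ does not follow for free. What rescues the argument is the unitarity of $\K_n$: it furnishes the exact norm identity $\|\K_n^m\p_n g\|=\|\p_n g\|$ at every step, which, combined with the standing strong convergence $\p_n^*\p_n\xrightarrow[]{s}\p^*\p$, upgrades the weak convergence produced by the inductive step to strong convergence. Without unitarity, the first summand in the inductive decomposition could only be controlled in a weak sense and the induction would stall; with it, the induction closes and both the weak convergence of $\xi_{g,n}$ and the functional calculus convergence follow from the density arguments outlined in the first paragraph.
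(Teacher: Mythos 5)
Your proposal is correct and follows essentially the same route as the paper: the crux in both is that unitarity supplies the norm identity $\|\K_n\p_n g\|=\|\p_n g\|$, which upgrades the assumed weak operator convergence to strong convergence, after which adjoints handle negative powers and density of trigonometric polynomials (with the uniform bound $\|\phi(\K_n)\|\le\|\phi\|_\infty$) yields both conclusions. Your explicit telescoping induction on $m$ is simply an unwinding of the paper's one-line appeal to the joint sequential continuity of multiplication in the strong operator topology on bounded sets, so the two arguments coincide in substance.
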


\begin{proof}
	Suppose that $\mathcal{K}_n\mathcal{P}_n\xrightarrow[]{w}\mathcal{K}\mathcal{P}$ and $g\in\mathcal{X}$. Then $\K_n\mathcal{P}_ng$ converge weakly in $\mathcal{X}$ to $\K\mathcal{P}g$ as $n\rightarrow\infty$. Since $\K_n$ is unitary and hence an isometry, $\|\K_n\mathcal{P}_ng\|=\|\mathcal{P}_ng\|$, which converges to $\|\mathcal{P}g\|=\|\K\mathcal{P}g\|$. Weak convergence of a sequence of vectors and convergence of norms implies convergence of the sequence of vectors. Hence, $\K_n\mathcal{P}_ng$ converges in $\mathcal{X}$ to $\K\mathcal{P}g$ and hence $\mathcal{K}_n\mathcal{P}_n\xrightarrow[]{s}\mathcal{K}\mathcal{P}$. Moreover, $\K_n^*\mathcal{P}_n\xrightarrow[]{w}\K^*\mathcal{P}$, since the adjoint operation is continuous with respect to the weak operator topology. The same argument now shows that $\mathcal{K}_n^*\mathcal{P}_n\xrightarrow[]{s}\mathcal{K}^*\mathcal{P}$.

	We prove that $\smash{\mathcal{E}_n}$ converges in the functional calculus sense to $\mathcal{E}$. The weak convergence of scalar-valued measures follows by taking an inner product and the fact that $\lim_{n\rightarrow\infty}\|\mathcal{P}_ng\|=\|\mathcal{P}g\|$. Trigonometric polynomials are dense in $\mathcal{C}_\pp$ and hence it is enough to consider $\phi(\varphi)=\exp(im\varphi)$ for $m\in\mathbb{Z}$ in \eqref{eq_meaning_weak_proj_val}. The functional calculus shows that for $g\in L^2(\Omega,\omega)$,
	$$
		\int_\pp  e^{im\varphi}\dd\mathcal{E}_n(\varphi)\mathcal{P}_nv=[\K_n \mathcal{P}_n]^mv,\quad
		\int_\pp  e^{im\varphi}\dd\mathcal{E}(\varphi) v=[\K \mathcal{P}]^mv.
	$$
	Since multiplication is jointly continuous in the strong operator topology and $\mathcal{K}_n\mathcal{P}_n\xrightarrow[]{s}\mathcal{K}\mathcal{P}$ and $\mathcal{K}_n^*\mathcal{P}_n\xrightarrow[]{s}\mathcal{K}^*\mathcal{P}$,
	$
		\lim_{n\rightarrow\infty}[A_n \mathcal{P}_n]^mv=[A \mathcal{P}]^mv
	$
	for all $m\in\mathbb{Z}$.
	The theorem follows.
\end{proof}

\subsection{Applications of \cref{thm_conv}}\label{sec:when_apply}
We can apply \cref{thm_conv} to mpEDMD~\cite{colbrook2023mpedmd} (\cref{sec_mpEDMD}) and periodic approximations~\cite{govindarajan2019approximation} (see \cref{sec_periodic}).

\begin{example}[mpEDMD]
	Suppose that $A_N=\mathcal{P}_N\mathcal{K}\mathcal{P}_N^*$, which corresponds to the large data limit of EDMD. Since $V_N$ is finite-dimensional, we can write
	$
		A_N=\mathcal{K}_N(A_N^*A_N)^{1/2},
	$
	for a unitary operator $\mathcal{K}_N$, where we take the non-negative square-root of the self-adjoint operator $A_N^*A_N$. The paper \cite{colbrook2023mpedmd} showed that mpEDMD is essentially a DMD-type approximation of the abstract operator $\mathcal{K}_N$. Moreover, it was shown that $\mathcal{K}_N\mathcal{P}_N\xrightarrow[]{s}\mathcal{K}$ and hence \cref{thm_conv}.
\end{example}

\begin{example}[periodic approximations]
	Periodic approximations explicitly enforce a bijection on a state-space partition to preserve the unitary property of the Koopman operator. The setup of \cite{govindarajan2019approximation} assumes that $\omega$ is finite and absolutely continuous with respect to the Lebesgue measure. They consider a sequence of partitions $P_n=\{p_{1,n},\ldots,p_{q_n,n}\}$ of $\Omega$ such that
	\begin{enumerate}
		\item[$(i)$] Each set in the partition is of equal measure:\footnote{This condition is why $\omega$ is restricted to have no atoms. In particular, the abstract theory surrounding the existence of periodic approximations assumes that the space $(\Omega,\omega)$ is a Lebesgue space. One should think of this as saying that $\omega$ is absolutely continuous with respect to Lebesgue measure up to atoms.} $\omega(p_{j,n})=\omega(\Omega)/q_n$;
		\item[$(ii)$] The maximum diameter converges to zero: $\lim_{n\rightarrow\infty}\max_{j=1,\ldots,q_n}\sup_{x,y\in p_{j,n}}|x-y|=0$;
		\item[$(iii)$] $P_{n+1}$ is a refinement of $P_n$.
	\end{enumerate}
	One can associate with the partition $P_n$ unitary operators $\K_n$ that act as a permutation on the sets in the partition.
	If $\Fv$ is continuous, then under the above conditions, there exists a choice of $\K_n$ with $\mathcal{K}_n\mathcal{P}_n\xrightarrow[]{s}\mathcal{K}$ \cite[Thm.~5.2]{govindarajan2019approximation}. This idea builds upon a rich tradition of studying ergodic systems through periodic approximations~\cite{halmos1944approximation,rokhlin1949selected,lax1971approximation,oxtoby1941measure}. For example, the Rokhlin--Halmos lemma \cite[Thm.~4.1.1]{sinai1989dynamical} says that periodic transformations (without necessarily enforcing the sets in the partition to have equal measure) are dense in the space of all automorphisms of a Lebesgue space equipped with a suitable topology. Various properties of the dynamical systems are connected to the rapidity of their approximation by the periodic ones~\cite{katok1967approximations}.
\end{example}

These examples serve as a theoretical motivation for our MultDMD algorithm introduced in \cref{sec_multiplicative}. Indeed, one can interpret MultDMD as a combination of mpEDMD with periodic approximations, along with suitable relaxations to make the resulting optimization problem feasible. Certain algorithmic choices we employ are motivated directly by \cref{thm_conv}. In particular, the proof of \cref{thm_conv} goes through for MultDMD with $n=N$ after the large data limit $M\rightarrow\infty$ (even in the case of non-unitary approximations) if:
\begin{itemize}
	\item MultDMD converges to $\K$ in the above weak sense;
	\item The norm of MultDMD applied to a given $g$ converges to $\|\K g\|$;
	\item The eigenvector matrix of MultDMD is uniformly conditioned as $n\rightarrow\infty$.
\end{itemize}
This final condition is needed to reduce the convergence when integrated against a general continuous $\phi$ to convergence when integrated against trigonometric polynomials. In practice, this can be achieved by clustering the data to ensure that the condition number of $\Gv$ remains bounded as $N \to \infty$, and by projecting onto the subspace of $V_N$ spanned by the eigenvectors of $\kmult$ associated with non-zero eigenvalues. The first two conditions can be satisfied under similar assumptions to those used in the periodic approximations discussed earlier. Finally, methods such as ResDMD can be employed to bound the error in the integral of the spectral measures of MultDMD against trigonometric polynomials, providing \textit{a posteriori} error bounds for the spectral measures.

\section{Numerical examples} \label{sec_examples_MultDMD}

We now consider four distinct examples. The first two are the nonlinear pendulum and the Lorenz system. These systems have Koopman operators with continuous spectra on $\mathbb{T}\backslash\{1\}$. The final two systems are standard benchmarks in fluid dynamics with noisy data and have a pure point spectrum. Note that the method of periodic approximations cannot be applied to any of these four systems.

\subsection{Nonlinear pendulum}
We consider the nonlinear pendulum given by
\begin{equation} \label{eq_nonlinear_pendulum}
	\dot{x}_1=x_2,\quad \dot{x}_2=-\sin(3x_1),\quad \x=(x_1,x_2)\in\Omega=[-\pi/3,\pi/3]_{\text{per}}\times \R.
\end{equation}
We simulate $400$ trajectories of the dynamical system \cref{eq_nonlinear_pendulum} by sampling the initial conditions on a uniform grid in $[-0.6,0.6]^2$ to ensure that the trajectories remain in the domain $[-1,1]^2$. We integrate the system in time up to $t=10$ using MATLAB's \texttt{ode45} function. The solutions are then interpolated at uniform times between $[0,10]$ with a time step $\Delta t = 0.1$, resulting in $M = 4\times 10^4$ snapshots.

\begin{figure}[htbp]
	\centering
	\begin{overpic}[width=\textwidth]{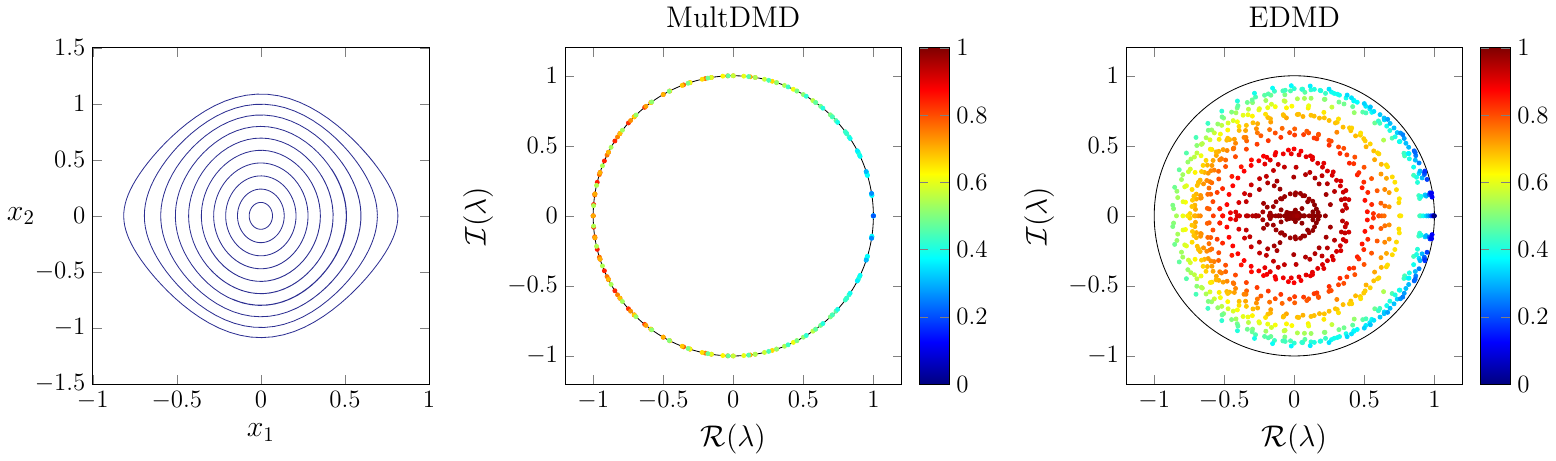}
	\end{overpic}
	\caption{Left: Trajectories of the nonlinear pendulum system~\cref{eq_nonlinear_pendulum}. Middle and right: Eigenvalues of the Koopman operator $\K$ for the nonlinear pendulum system computed using MultDMD and EDMD. The eigenvalues are colored according to the residual of the eigenpairs, estimated by the ResDMD method. The eigenvalues corresponding to eigenvectors supported on fewer than $50$ support elements have been discarded to take into account discretization error, as well as the zero eigenvalues computed by MultDMD. The unit circle is highlighted in black and indicates the spectrum of the Koopman operator.}
	\label{fig_eigenvalue_pendulum}
\end{figure}

The left panel in \cref{fig_eigenvalue_pendulum} displays examples of trajectories of the system. We select $N=1000$ basis functions by performing a Vorono\"i tesselation of the state space with the $k$-means algorithm using the data points $\x^{(1)},\ldots,\x^{(M)}$. The partition of the state space can be observed in \cref{fig_eigenvector_pendulum}. The initial centroid locations are chosen by selecting $N$ observations from $\x$ using the k-means++ algorithm~\cite{arthur2007k}, following MATLAB's default implementation of k-means. The quadrature weights $\{w_m\}_{m=1}^M$ used to discretize the integral in \cref{eq_ContinuousLeastSquaresProblem} are chosen to be uniform as $w_m = 1/M$ for simplicity. We then compute the matrix $\kmult$, which approximates the action of the Koopman operator on the finite-dimensional subspace spanned by the piecewise constant basis functions using MultDMD (\cref{alg_constraint}). We also compute the matrix $\kdmd$ using the standard EDMD procedure described in \cref{sec_edmd} as a comparison.

\begin{figure}[htbp]
	\centering
	\begin{overpic}[width=0.9\textwidth]{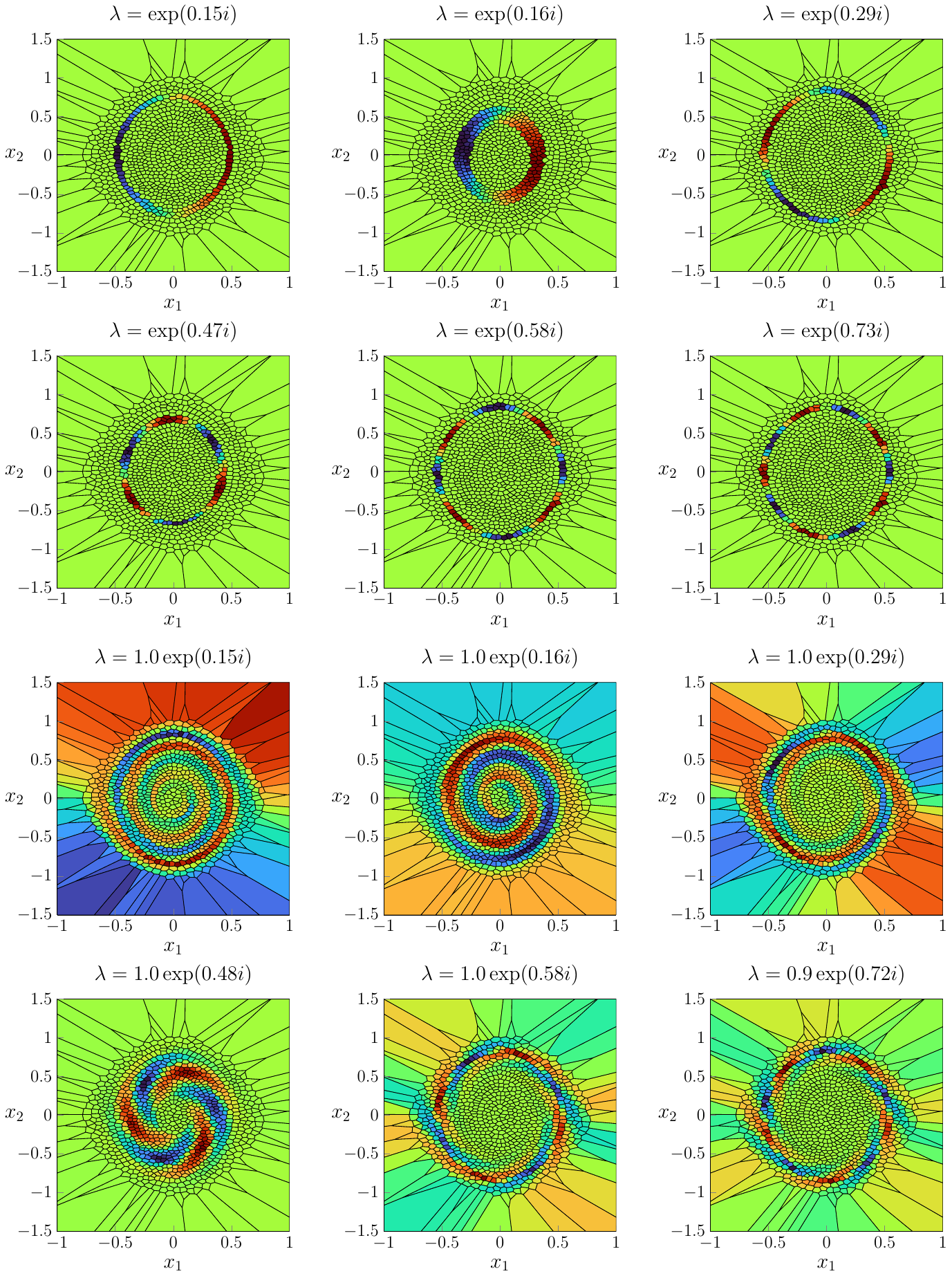}
		\put(-2,98){(a)}
		\put(-2,47.5){(b)}
	\end{overpic}
	\caption{Eigenfunctions for the pendulum system along with the corresponding eigenvalues calculated using the MultDMD algorithm (a) and EDMD algorithm (b). The eigenfunctions are normalized to have norm one.}
	\label{fig_eigenvector_pendulum}
\end{figure}

The middle and right panels of \cref{fig_eigenvalue_pendulum} display the eigenvalues of $\kmult$ and $\kdmd$, which are colored according to the residual of the eigenpair $(\lambda,\gv)$, computed by ResDMD~\cite{colbrook2023residualJFM,colbrook2021rigorous}. Namely, for a candidate eigenpair $(\lambda,\gv)$, the relative residual is defined as~\cite[Eq.~(3.2)]{colbrook2023residualJFM}
\begin{equation} \label{eq_res_dmd}
	\textrm{res}(\lambda,\gv) = \sqrt{ \frac{\gv^*[\Psiv_Y^*\Wv\Psiv_Y-\lambda[\Psiv_X^*\Wv\Psiv_Y]^*-\overline{\lambda}\Psiv_X^*\Wv\Psiv_Y+|\lambda|^2\Psiv_X^*\Wv\Psiv_X]\gv}{\gv^*[\Psiv_X^*\Wv\Psiv_Y]\gv}}.
\end{equation}
This is a convergent approximation of the (infinite-dimensional) residual $\|(\K-\lambda I)g\|/\|g\|$ of the Koopman operator $\mathcal{K}$. An advantage of our choice of basis functions is that \cref{eq_res_dmd} can be computed efficiently since the matrices $\Psiv_X^* \Wv\Psiv_X$ and $\Psiv_Y^* \Wv\Psiv_Y$ are diagonal, while $\Psiv_X^*\Wv\Psiv_Y$ is often sparse. If $\omega$ denotes the matrix defined in \cref{eq_omega_ij}, then
$$
	\Psiv_X^* \Wv\Psiv_X = \diag(\text{sum}(\omega,2)),\quad \Psiv_Y^* \Wv\Psiv_Y = \diag(\text{sum}(\omega,1)),
$$
where we employed MATLAB's notation for the sum of a matrix over its columns or rows. The eigenvalues of MultDMD lie on the unit circle, which is the spectrum of the Koopman operator. In contrast, many EDMD eigenvalues are spurious and inside the circle despite using the same dictionary as MultDMD.

Next, in \cref{fig_eigenvector_pendulum}, we display a selection of eigenfunctions computed using the MultDMD and EDMD algorithms. The EDMD eigenfunctions show severe dissipation, evidenced by the lack of coherency along constant energy surfaces. In contrast, the MultDMD eigenfunctions show modal structure along constant energy surfaces. Although the Koopman operators associated with the nonlinear pendulum have no eigenvalues (except $\lambda=1$), MultDMD reflects the generalized eigenfunctions of the Koopman operator, which are Dirac delta distributions of plane waves supported on constant energy surfaces~\cite{mezic2020spectrum, colbrook2024rigged}.

\begin{figure}[htbp]
	\centering
	\begin{overpic}[width=0.8\textwidth]{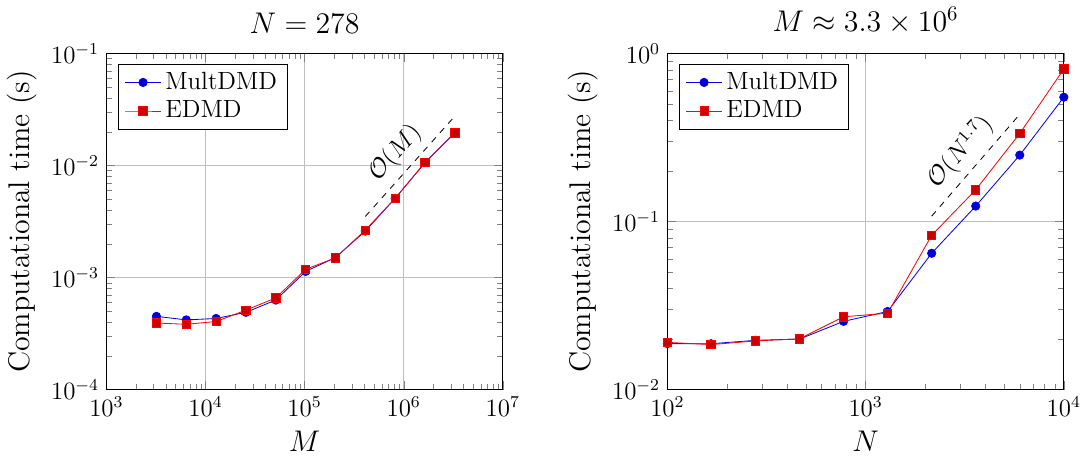}
	\end{overpic}
	\caption{Computational time for computing the Koopman matrix with EDMD and MultDMD as a function of the number of sample points $M$ at $N=278$ (left) and the number of basis functions $N$ at $M=2^{15}\times 100 \approx 3.3\times 10^6$ (right).}
	\label{fig_timing_dmd}
\end{figure}

Finally, in \cref{fig_timing_dmd}, we report the computational time\footnote{Timings were measured in MATLAB R2023a on a laptop with an Intel® Core™ i5-8350U CPU.} of EDMD and MultDMD with respect to the number of snapshots $M$ and the number of basis functions $N$. Here, we record the computational time needed for computing the approximation matrix to the Koopman operator given the set of basis functions (similar in both methods), \emph{i.e.} the timings of \cref{alg_constraint} in the case of MultDMD. While both methods have similar asymptotic behavior, we observe that MultDMD is slightly faster than EDMD, especially for large values of $N$. This is because the matrix $\kmult$ is sparse, while $\kdmd$ is dense.

\subsection{Lorenz attractor}
Next, we consider the Lorenz system~\cite{lorenz1963deterministic} described by three coupled ordinary differential equations:
\begin{equation} \label{eq_lorenz}
	\dot{x}=10\left(y-x\right),\quad\dot{y}=x\left(28-z\right)-y,\quad \dot{z}=xy-8z/3.
\end{equation}
We use the initial condition $x_0=y_0=z_0=1$ and consider the dynamics of $\x=(x,y,z)$ on the Lorenz attractor $\Omega$. The system is chaotic and strongly mixing~\cite{luzzatto2005lorenz} (and hence ergodic). It follows that the only eigenvalue (including multiplicities) of $\mathcal{K}$ is the trivial eigenvalue $\lambda=1$, corresponding to a constant eigenfunction. We collect snapshots of the system by integrating \cref{eq_lorenz} in time using MATLAB's \texttt{ode45} implementation of an explicit $(4,5)$ Runge--Kutta method up to a final time of $T=10^4$ with a time step of $\Delta t = 0.01$, resulting in $10^6$ snapshots. Then, we remove the first $10^4$ snapshots to neglect the effect of the initial condition and ensure that the trajectories lie on the Lorenz attractor, resulting in $M=9.9\times 10^5$ data points. Although this system is chaotic, we still have convergence as in \cref{quad_convergence} owing to the phenomenon of shadowing. Similarly to the nonlinear pendulum example, we perform a Vorono{\"i} tesselation of the state space with the $k$-means algorithm and select $N=5000$ basis functions. Given the large number of data points, we perform $k$-means on a smaller matrix consisting of $\x$ subsampled every ten time steps to find the centroid locations. The quadrature weights $\{w_m\}_{m=1}^M$ used to discretize the integral in \cref{eq_ContinuousLeastSquaresProblem} are chosen to be uniform as $w_m = 1/M$, which corresponds to ergodic sampling.

\begin{figure}[htbp]
	\centering
	\begin{overpic}[width=\textwidth]{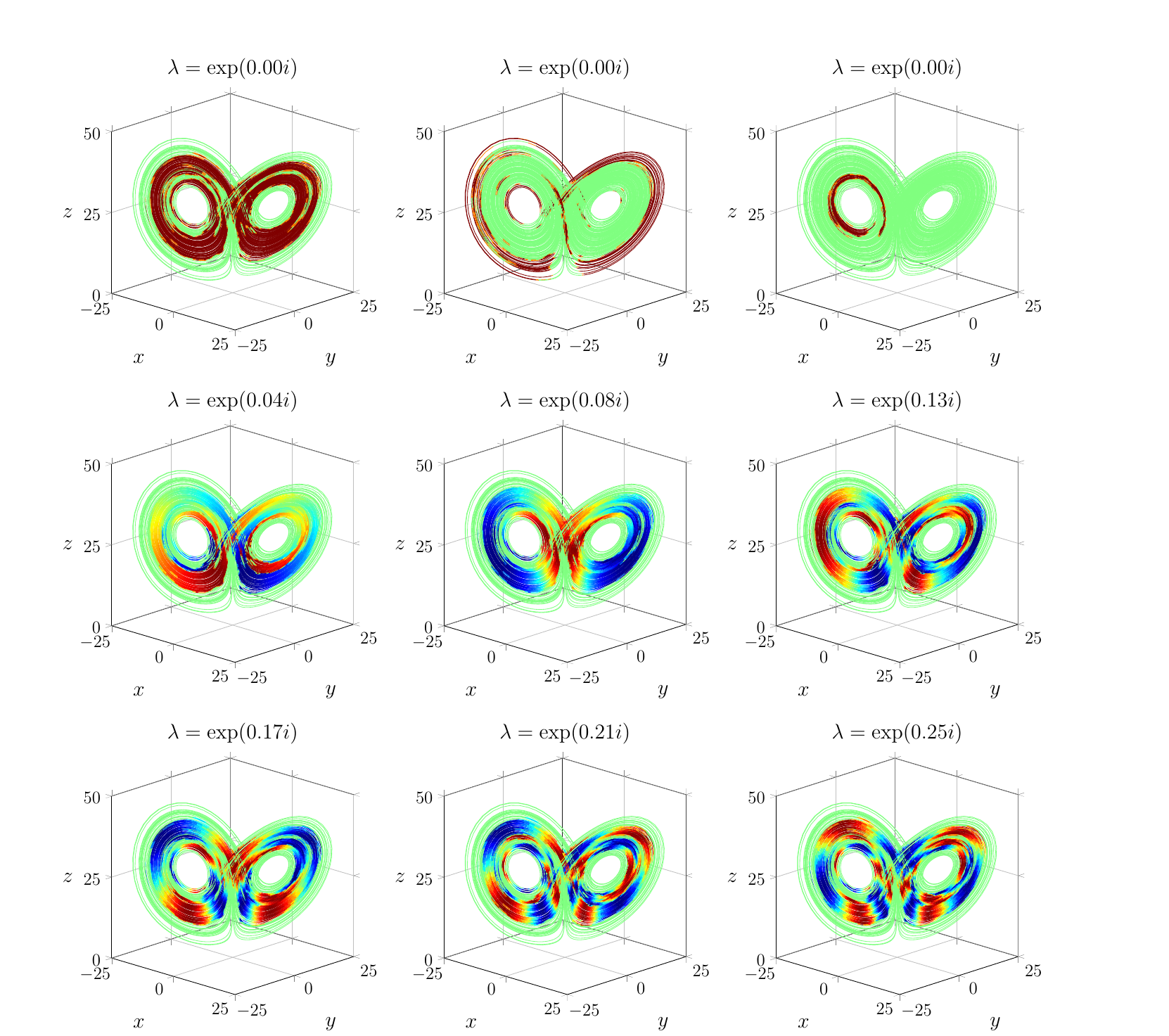}
	\end{overpic}
	\caption{MultDMD eigenfunctions for the Lorenz system and associated eigenvalues.}
	\label{figure_lorenz_eigenvector}
\end{figure}

\cref{figure_lorenz_eigenvector} displays a subset of the eigenfunctions computed using MultDMD. Although the system has no non-trivial eigenfunctions, the MultDMD eigenfunctions should be interpreted as approximate eigenfunctions and feature coherent structures. Moreover, the eigenvalues obtained by MultDMD satisfy a multiplicative group structure of the form $\{\lambda_0^k\}_k$, where $\lambda_0\approx e^{0.04 i}$, that approximate the continuous spectrum of the Lorenz system located on the unit circle. A movie available as Supplementary Material shows the evolution of the MultDMD eigenfunctions over time.

\begin{figure}[htbp]
	\centering
	\begin{overpic}[width=0.4\textwidth]{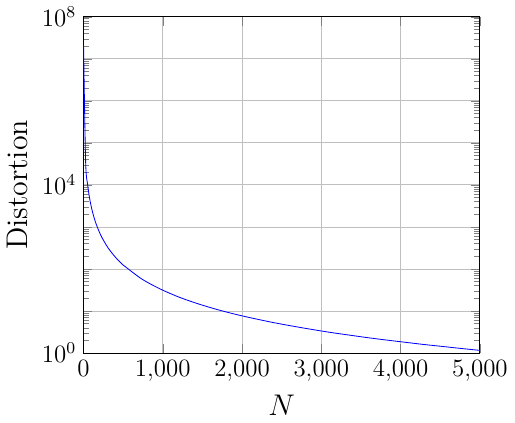}
	\end{overpic}
	\caption{Distortion (average distance to nearest centroid) of the $k$-means clustering algorithm for the Lorenz system as a function of the number of basis functions $N$.}
	\label{figure_elbow_lorenz}
\end{figure}

Finally, in \cref{figure_elbow_lorenz}, we display the distortion of the $k$-means clustering algorithm as a function of the number of basis functions $N$ for the Lorenz system example. The distortion is defined as the average distance of each data point to its nearest centroid and may be used to determine the appropriate number of basis functions. The plot shows an elbow point at $N=500$, indicating that one can reduce the number of basis functions without significantly affecting the quality of the approximation.

\begin{figure}[htbp]
	\centering
	\begin{overpic}[width=\textwidth]{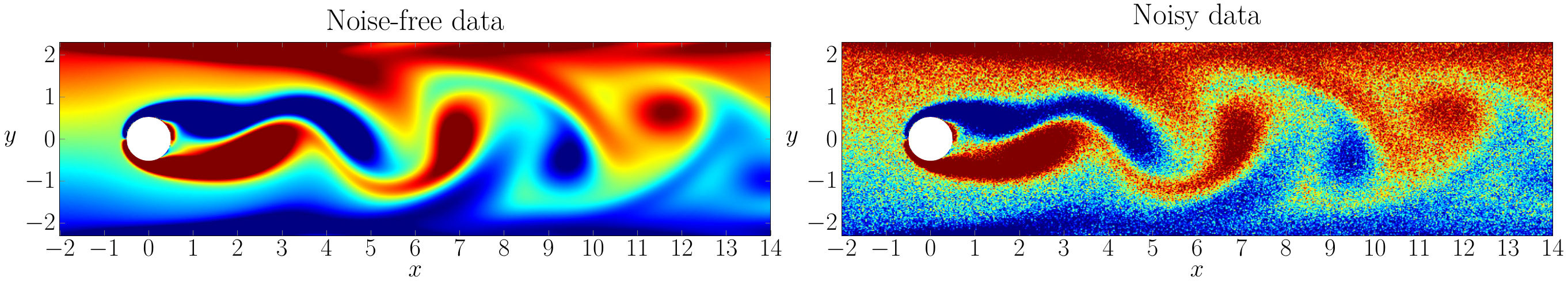}
	\end{overpic}
	\caption{Left: Snapshot of the vorticity for the cylinder wake example. Right: Snapshot of the vorticity with 40\% added Gaussian random noise.}
	\label{figure_noisy_cylinder_1}
\end{figure}

\subsection{Noisy cylinder wake}

For our next example, we consider a classical cylinder wake at $\textrm{Re}=100$. We use the dataset described in~\cite{colbrook2024another} (computed using an incompressible, two-dimensional lattice-Boltzmann solver \cite{jozsa2016validation,szHoke2017performance} at 158624 grid points) and consisting in vorticity field. We consider just 80 snapshots and corrupt the data with 40\% Gaussian random noise. A snapshot of the vorticity data, with and without noise, is shown in \cref{figure_noisy_cylinder_1}. To apply the MultDMD algorithm, we first project onto the first three POD modes. We then run MultDMD in this compressed state space with $N=80$. This example demonstrates how MultDMD can be applied to systems with an initially high-dimensional state space and to data corrupted with severe noise.

\begin{figure}[htbp]
	\centering
	\begin{overpic}[width=0.6\textwidth]{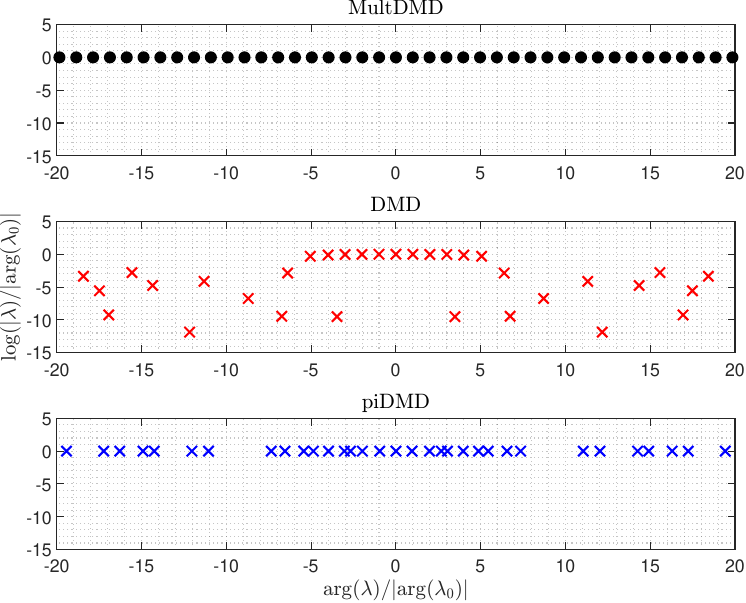}
	\end{overpic}
	\caption{Comparison of eigenvalues computed using MultDMD (top), exact DMD (middle), and mpEDMD/piDMD (bottom), which produce the same eigenvalues for this particular example when using a POD basis. The eigenvalues are normalized with respect to the base eigenvalue of $\lambda_0$ of the system. Hence, the exact eigenvalues correspond to $0,\pm1,\pm2,\ldots$ in the figure.}
	\label{figure_noisy_cylinder_2}
\end{figure}

At this Reynolds number, the flow is periodic. The Koopman operator associated with this system has a pure point spectrum consisting of powers of a base eigenvalue $\lambda_0$. \cref{figure_noisy_cylinder_2} shows the eigenvalues computed by MultDMD, exact DMD, and mpEDMD/piDMD.\footnote{piDMD and mpDMD produce the same eigenvalues for this particular example when using a POD basis.} While all of the standard methods are successful in the noise-free regime, they fail to approximate the spectrum accurately under severe noise perturbation. In contrast, MultDMD successfully captures the lattice structure of the spectrum in this high-noise regime. \cref{figure_noisy_cylinder_3} compares the dominant modes computed by MultDMD with the noisy data and the ground truth (noise-free). MultDMD successfully captures coherent features, even in the presence of severe noise perturbation.

\begin{figure}[htbp]
	\centering
	\begin{overpic}[width=\textwidth]{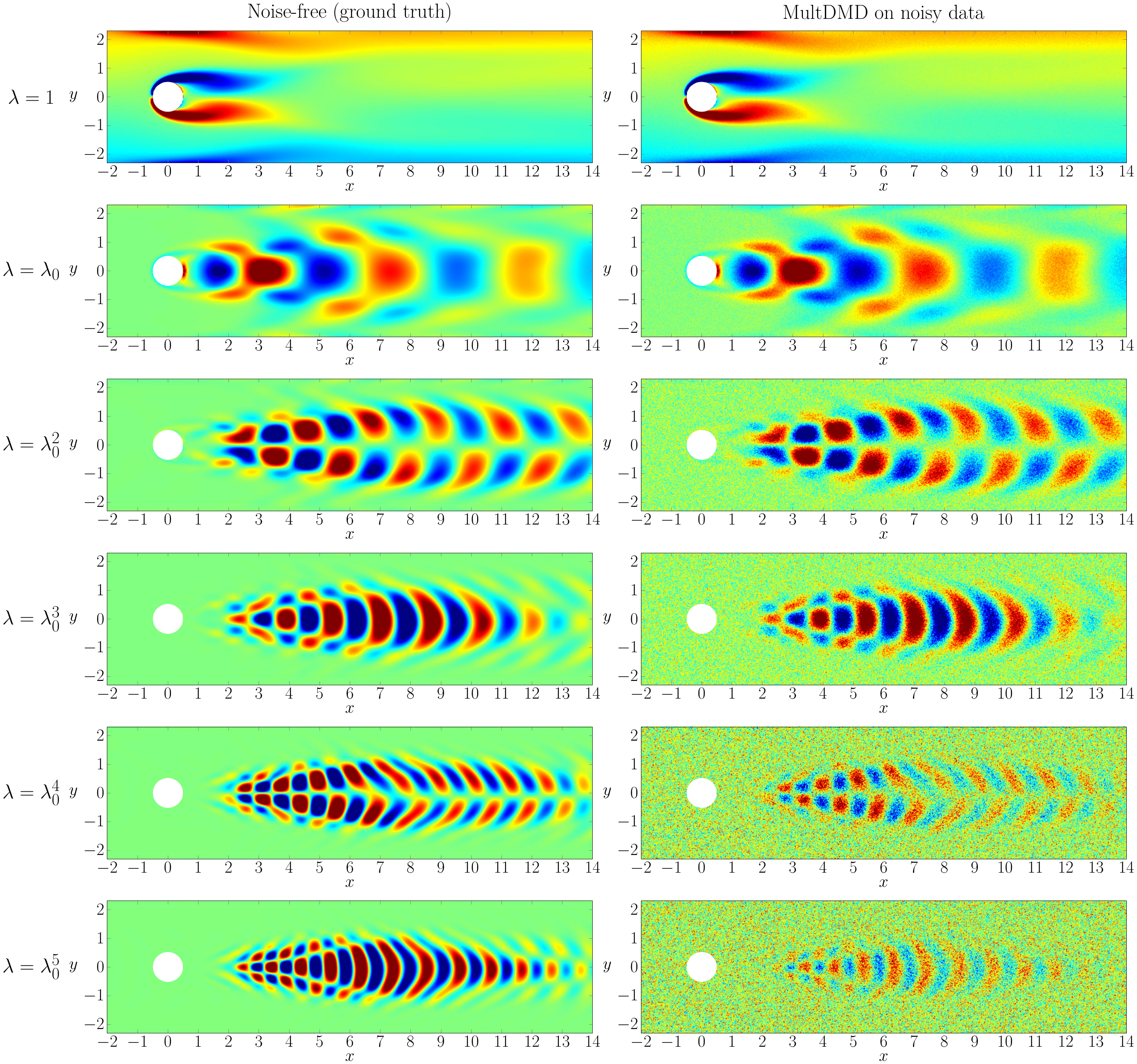}
	\end{overpic}
	\caption{The MultDMD modes of the cylinder wake corresponding to the mean and first five powers of $\lambda_0$ (negative powers of $\lambda_0$ can be obtained by conjugate symmetry).}
	\label{figure_noisy_cylinder_3}
\end{figure}

\subsection{Noisy lid-driven cavity}

As a final example, we consider the classical lid-driven cavity flow. The setup is an incompressible viscous fluid confined to a rectangular box with a moving lid. We use data from \cite{arbabi2017study} at $\textrm{Re}=16000$ computed using a Chebyshev collocation method with $65\times65=4225$ collocation points. We consider 1000 snapshots and corrupt the data with 40\% Gaussian random noise. A snapshot of the vorticity data, with and without noise, is shown in \cref{figure_noisy_cavity_1}. To apply the MultDMD algorithm, we first project onto the first five POD modes. We then run MultDMD in this compressed state space with $N=1000$.

\begin{figure}[htbp]
	\centering
	\begin{overpic}[width=0.7\textwidth]{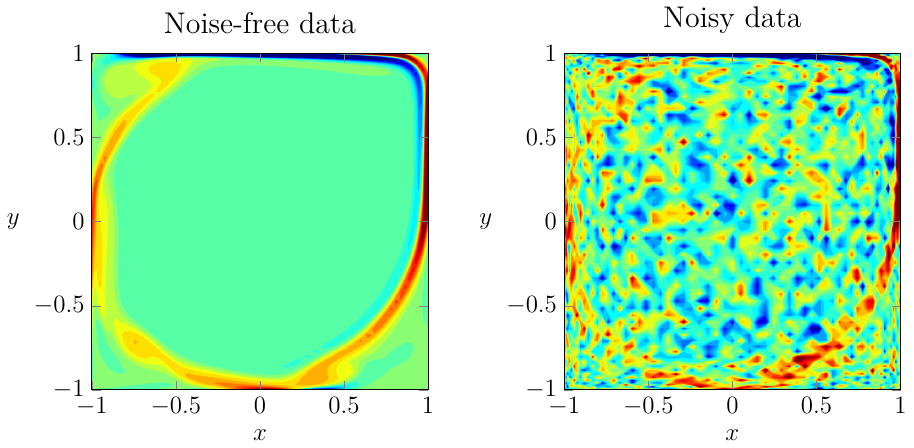}
	\end{overpic}
	\caption{Left: Snapshot of the vorticity for the lid-driven cavity. Right: Snapshot of the vorticity with 40\% added Gaussian random noise.}
	\label{figure_noisy_cavity_1}
\end{figure}

At this Reynolds number, the flow is quasiperiodic and posses an attractor diffeomorphic to a torus, with \textit{two} base Koopman eigenvalues $\lambda_1$ and $\lambda_2$ \cite[App.~B]{arbabi2017study}. The Koopman spectrum is pure point with eigenvalues $\lambda_1^n\lambda_2^m$ for $m,n\in\mathbb{Z}$. \cref{figure_noisy_cavity_2} shows the eigenvalues computed by MultDMD, exact DMD, and mpEDMD/piDMD.\footnote{Again, piDMD and mpDMD produce the same eigenvalues for this particular example when using a POD basis.} MultDMD is once again able to capture the lattice structure of the spectrum in this high noise regime, unlike all of the other methods. Finally, we compute the MultDMD modes corresponding to $\lambda=1,\lambda_1$ and $\lambda_2$.\footnote{These are not the eigenvalues closest to 1 in \cref{figure_noisy_cavity_2} due to the quasiperiodic structure of the spectrum.} These are shown in \cref{figure_noisy_cavity_3} and compared to the noise-free ground truth. We observe that MultDMD successfully captures coherent features, even in the presence of severe noise.

\begin{figure}[htbp]
	\centering
	\begin{overpic}[width=0.6\textwidth]{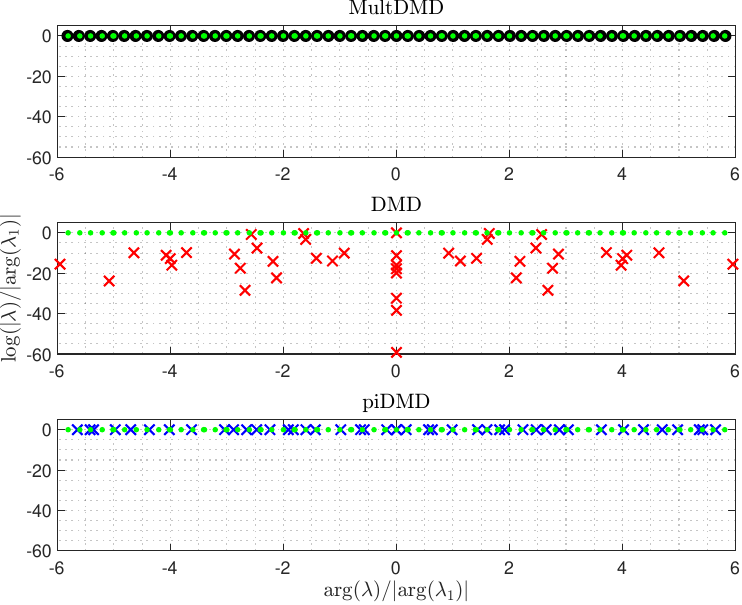}
	\end{overpic}
	\caption{Comparison of eigenvalues computed using MultDMD (top), exact DMD (middle), and mpEDMD/piDMD (bottom), which produce the same eigenvalues for this particular example when using a POD basis. The eigenvalues are normalized with respect to one of the base eigenvalues, $\lambda_1$, of the system. The dominant eigenvalues (computed with noise-free data and verified by comparing all three methods) are shown as green dots.}
	\label{figure_noisy_cavity_2}
\end{figure}

\begin{figure}[htbp]
	\centering
	\begin{overpic}[width=0.95\textwidth]{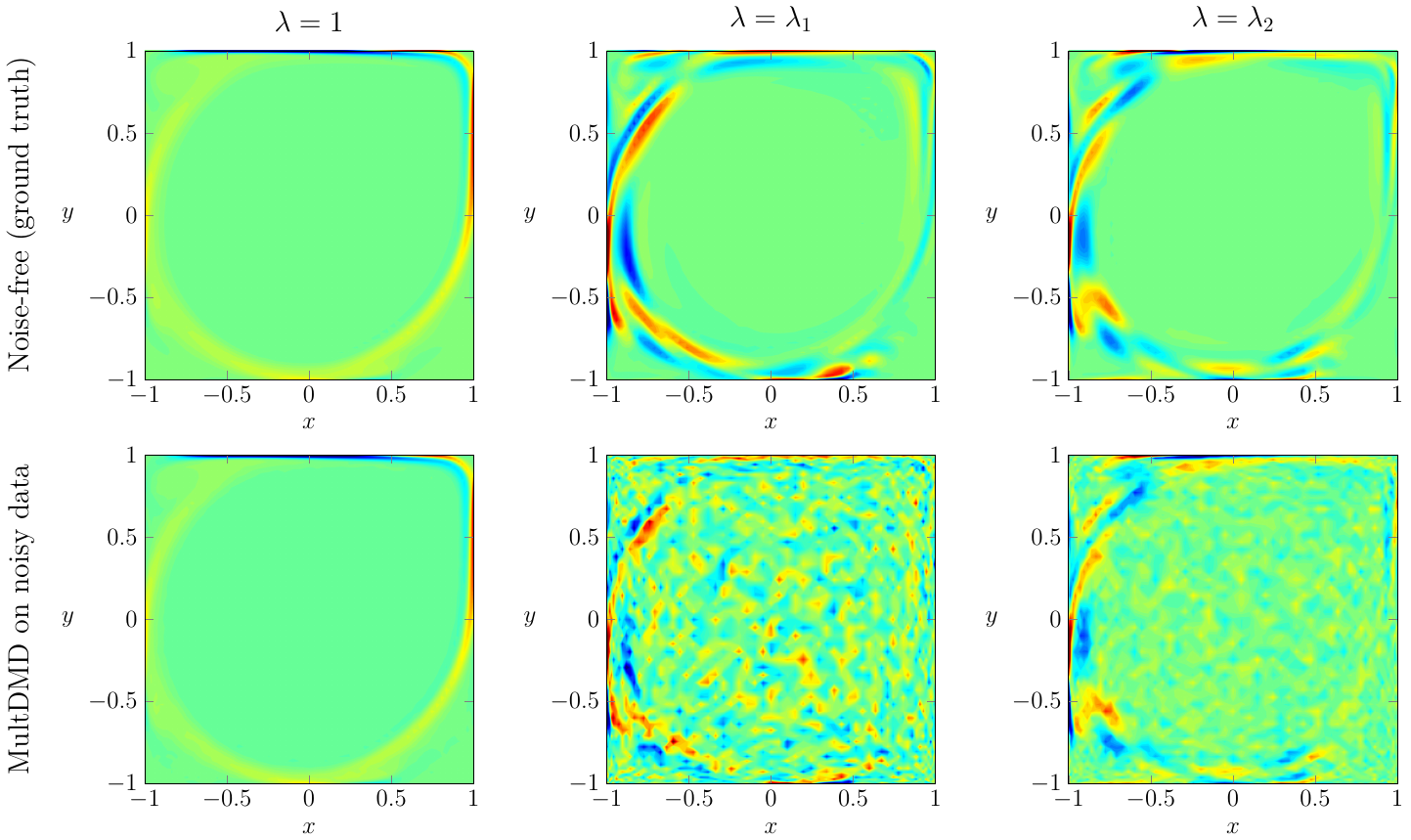}
	\end{overpic}
	\caption{The first three modes of the noise-free lid-driven cavity example (top row) along with the ones discovered by MultDMD on noisy data (bottom row).}
	\label{figure_noisy_cavity_3}
\end{figure}

\section{Conclusions}\label{sec_conclusions}

We introduced the Multiplicative Dynamic Mode Decomposition (MultDMD) as a robust approach for approximating the Koopman operator whilst preserving its inherent multiplicative structure within a finite-dimensional framework. The effectiveness of our method is evidenced through examples from various dynamical systems, including the nonlinear pendulum, the Lorenz system, and a noisy cylinder wake. Each example highlights MultDMD's capability to capture and preserve the spectral properties of the Koopman operator, proving to be especially proficient in environments afflicted by significant noise levels.

Numerous further extensions of this work could be considered. For example, the approximate point spectrum and the set of eigenvalues of general Koopman operators (not necessarily unitary) exhibit circular symmetry. It would be interesting to extend MultDMD to systems with a dissipative component or to apply it to the measure-preserving parts of systems. Another strategy could involve enforcing the multiplicative structure after forming a matrix approximation of the Koopman operator, possibly within a larger tensor-product space of observables. One could also consider adaptive methods that dynamically adjust our basis functions based on the evolving characteristics of the dynamical system. While MultDMD has shown promise in computational efficiency, there is scope for algorithmic improvements in very high-dimensional systems. Finally, deeper theoretical investigations into the convergence properties of MultDMD, especially concerning its ability to recover exact Koopman operators under varying conditions, would be beneficial. A challenge here is that MultDMD does not necessarily lead to unitary approximations.


\bibliographystyle{siamplain}
\bibliography{bib_file_DMD3}

\end{document}